\def\11{\mathbbm{1}}
\def\ER{Erd\H{o}s-R\'enyi\ }
\def\Fop{\operatorname{F}}
\newcommand{\Pb}{\mathbb P}
\newcommand{\Qb}{\mathbb Q}
\newtheorem{thm}{Theorem}[section]
\newtheorem{question}[thm]{Question}
\newtheorem{conjecture}[thm]{Heuristic}
\newtheorem{lemma}[thm]{Lemma}
\newtheorem{defn}[thm]{Definition}
\newtheorem{remark}[thm]{Remark}
\newtheorem{DEF}[thm]{Definition}
\numberwithin{equation}{section}
\title{Improved Computational Lower Bound of Estimation for Multi-Frequency Group Synchronization}
\author[1]{Zhangsong Li\thanks{Email: \textit{ramblerlzs@pku.edu.cn}. Partially supported by the National Key R$\&$D program of China (Project No. 2023YFA1010103) and the NSFC Key Program (Project No. 12231002).}}
\affil[1]{School of Mathematical Sciences, Peking University}
\date{\today}
\begin{document}
\maketitle

\begin{abstract}
    We study the computational phase transition in a multi-frequency group synchronization problem, where pairwise relative measurements of group elements are observed across multiple frequency channels and corrupted by Gaussian noise. Using the framework of \emph{low-degree polynomial algorithms}, we analyze the task of estimating the structured signal in such observations. We show that, assuming the low-degree heuristic, in synchronization models over the circle group $\mathsf{SO}(2)$, a simple spectral method is computationally optimal among all polynomial-time estimators when the number of frequencies satisfies $L=n^{o(1)}$. This significantly extends prior work \cite{KBK24+}, which only applied to a fixed constant number of frequencies. Together with known upper bounds on the statistical threshold \cite{PWBM18a}, our results establish the existence of a \emph{statistical-to-computational gap} in this model when the number of frequencies is sufficiently large.
\end{abstract}

\tableofcontents

\section{Introduction}{\label{sec:intro}}

\subsection{Group Synchronization}{\label{subsec:group-synchron}}

The recovery of a hidden structured object from noisy matrix-valued observations is a canonical problem in statistics and machine learning \cite{LV07, HTFF09}. In many such problems, the underlying object exhibits a rich group structure, often reflecting inherent physical symmetries or geometric properties of the data. This is particularly evident in applications such as cryo-electron microscopy (cryo-EM), image analysis, and computer vision. A central task within this framework is \emph{group synchronization}, where one aims to estimate unknown group elements from their noisy pairwise comparisons. Beyond its broad practical relevance, synchronization presents a compelling interplay of algebraic structure and statistical inference, making it a fertile ground for theoretical study.

Orientation estimation in cryo-electron microscopy provides a canonical example of a synchronization problem \cite{SS11}. In cryo-EM, the three-dimensional structure of a biomolecule is reconstructed from its noisy two-dimensional projection images. This reconstruction requires estimating the unknown viewing orientations $\bm g_u \in \mathsf{SO}(3)$ from noisy measurements of their relative alignments $\bm g_u \bm g_v^{-1}$. Other examples include community detection in graphs (which can be cast as synchronization over $\mathbb Z_2$) \cite{DAM16, Mas14, MNS15, MNS18}, multi-reference alignment in signal processing (which involves synchronization over $\mathbb Z_L$) \cite{BCSZ14}, network clock synchronization \cite{GK06}, and many others \cite{CLS12, PBPB15, BCLS20}.

In a general synchronization problem over a group $\bm G$, one aims to recover the group-valued vector $\bm u=(\bm g_1,\ldots,\bm g_n) \in \bm G^n$ from noisy pairwise information about $\bm g_k \bm g_j^{-1}$ for all pairs of $(k,j)$. A natural way to model this is to postulate that we obtain a function of $\bm g_k \bm g_j^{-1}$ corrupted with additive Gaussian noise,
\begin{equation}{\label{eq-group-synchronization}}
    \bm Y_{k,j} = f(\bm g_k \bm g_j^{-1}) + \bm W_{k,j}
\end{equation}
for i.i.d.\ Gaussian random variables $\bm W_{k,j}$. We focus on the specific case of angular synchronization, where the objective is to determine phases $\varphi_1,\ldots,\varphi_n \in [0,2\pi]$ from their noisy relative observation $\varphi_k-\varphi_j \mod 2\pi$ \cite{Sin11, BBS17}. This problem can be seen as synchronization over $\mathsf{SO}(2)$, or equivalently, over the complex circle group $\mathsf{U}(1)=\{ e^{i\varphi}: \varphi\in[0,2\pi) \}$. 
\begin{DEF}[Angular synchronization]{\label{def-group-synchronization}}
    We denote the isomorphic groups by
    \begin{equation}{\label{eq-def-group-S}}
        \mathbb S:=\mathsf{SO}(2) \cong \mathsf{U}(1) \,.
    \end{equation}
    In addition, define the Gaussian orthogonal or unitary ensembles (GOE or GUE, respectively) to be the laws of the following Hermitian random matrices $\bm W$. We have $\bm W \in \mathbb R^{n*n}$ or $\bm W \in \mathbb C^{n*n}$ respectively, and its entries are independent random variables except for being Hermitian $\bm W_{j,k}=\overline{\bm W}_{k,j}$. The off-diagonal entries are real or complex standard Gaussian random variables\footnote{A complex standard Gaussian has the law of $x+iy$ where $x,y \sim \mathcal N(0,\frac{1}{2})$ are independent.}, and the diagonal entries follow real Gaussian distribution with $\bm W_{j,j} \sim \mathcal N(0,2)$ or $\bm W_{j,j} \sim \mathcal N(0,1)$, respectively. We write $\mathsf{GOE}(n)$ and $\mathsf{GUE}(n)$ for the respective laws. We consider the synchronization problem where each pairwise alignment between elements $k$ and $j$ is expressed as $e^{i(\varphi_k-\varphi_j)}$, and the obtained noisy observation is
    \begin{equation}{\label{eq-def-angular-synchronization}}
        \bm Y_{k,j} = \frac{\lambda}{\sqrt{n}} \bm x_k \overline{\bm x}_j + \bm W_{k,j} \,.
    \end{equation}
    Here $\bm x_k=e^{i\varphi_k}$ and $\overline{\bm x}_j=e^{-i\varphi_j}$ denotes the complex conjugate. The scalar parameter $\lambda$ is the signal-to-noise ratio, and $\bm W \sim \mathsf{GUE}(n)$ is Gaussian noise as above. 
\end{DEF}
In this case, the observation can also be seen as a rank-one perturbation of the Wigner random matrix $\bm W$,
\begin{align}{\label{eq-def-spiked-Wigner}}
    \bm Y = \frac{\lambda}{\sqrt{n}} \bm x \bm x^* + \bm W \mbox{ where } \bm x^*=\overline{\bm x}^{\top} \,.
\end{align}
This setup, often called a spiked Wigner model, has been extensively studied in random matrix theory and statistics. It exhibits a sharp phase transition in the feasibility of detecting or estimating the signal $\bm x$, which is governed by a variant of the Baik–Ben Arous–P\'ech\'e (BBP) transition \cite{BBP05, FP07, BN11}. Above the BBP threshold $\lambda>1$ , detection is possible based on the top eigenvalue of the observation matrix. Moreover, the top eigenvector of $\bm Y$ correlates non-trivially with the true signal $\bm x$. Conversely, when $\lambda<1$, neither the largest eigenvalue nor its eigenvector provides reliable information about the signal in the high-dimensional limit. The spectral method that extracts the signal via the leading eigenvector is commonly referred to as \emph{principal component analysis (PCA)}. PCA, however, ignores any structural prior information about the signal such as sparsity or entry-wise positivity. Consequently, the spectral threshold depends only on the $L_2$ norm of the signal $\bm x$, and for some choices of a prior distribution of $\bm x$, the performance of PCA is sub-optimal compared to the algorithms exploiting this structural information about the signal \cite{ZHT06, dGJL07, JL09, MR14}.

Nevertheless, while PCA can be improved upon for certain sparse priors, it remains statistically optimal for many dense priors, where no statistic can surpass the spectral threshold \cite{DAM16, PWBM18a}. Examples of such settings for synchronization problems cast as spiked matrix models include $\mathbb Z_2$ synchronization, angular synchronization, and many other random matrix spiked models. In addition, from a computational perspective, it is shown in \cite{KWB22, MW25} that under mild assumptions on the prior distribution of $\bm x$, a large class of algorithms namely those based on {\em low-degree polynomials} cannot surpass the spectral threshold. This provides strong evidence that the spectral transition represents a fundamental computational barrier for a broad range of efficient algorithms, and suggests the emergence of a statistical-computational gap when the prior distribution of $\bm x$ is sparse.

\subsection{Multi-frequency group synchronization}{\label{subsec:multi-frequency-group-synchron}}

In modern data analysis, the introduction of multiple frequency channels raises a compelling question that can the increased signal information they provide lower the computational threshold for detection or estimation. In this work, we consider obtaining measurements through several frequency channels, which is motivated by the Fourier decomposition of the non-linear objective of the non-unique games problem \cite{BCLS20} (see \cite[Remark~1]{KBK24+} for further details of such connection). In the angular synchronization case, this translates to obtaining the following observations.
\begin{DEF}[Multi-frequency angular synchronization]{\label{def-multi-frequency-group-synchronization}}
    Consider the synchronization problem where the observation constitutes of $L$ matrices, defined by
    \begin{equation}{\label{eq-def-multi-frequency-group-syncron}}
    \left\{ \begin{aligned}
        & \bm Y_1 = \frac{\lambda}{\sqrt{n}} \bm x\bm x^{*} + \bm W_1 \,, \\
        & \bm Y_2 = \frac{\lambda}{\sqrt{n}} \bm x^{(2)} \big(\bm x^{(2)} \big)^{*} + \bm W_2 \,, \\
        & \ldots \\
        & \bm Y_L = \frac{\lambda}{\sqrt{n}} \bm x^{(L)} \big( \bm x^{(L)} \big)^{*} + \bm W_L \,. \end{aligned} \right.
    \end{equation}
    Here $\bm x^{(k)}$ denotes the entrywise $k$-th power, and $\bm W_1,\ldots,\bm W_L$ are independent noise matrices sampled from $\mathsf{GUE}(n)$. 
\end{DEF}

With the scaling above, PCA succeeds in detecting a signal if any one of the $\bm Y_i$ past the spectral threshold $\lambda>1$. One may expect that by combining information over the $L$ frequencies, it ought to be possible to detect the signal reliably once $\lambda>1/\sqrt{L}$. This intuition comes from the fact that given $L$ independent draws of a single frequency, PCA would indeed detect the signal once $\lambda>1/\sqrt{L}$. It is at least known that the above hope is too good to be true: while our intuition would lead us to believe that it should be possible to detect the signal from two frequencies once $\lambda>1/\sqrt{2} \approx 0.707$, actually it is provably information-theoretically impossible for any $\lambda<0.937$ \cite{PWBM18a}. In general, it was shown in \cite{PWBM18a} that it is impossible to detect the signal once 
\begin{equation}{\label{eq-info-lower-bound}}
    \lambda<\sqrt{ \frac{2(L-1)\log(L-1)}{L(L-2)} } \,.
\end{equation}
Conversely, they also showed that, for sufficiently large $L$, there exists an \emph{inefficient} algorithm for detection that succeeds once
\begin{equation}{\label{eq-info-upper-bound}}
    \lambda>\sqrt{ \frac{4\log L}{L-1} } \gg \frac{1}{\sqrt{L}} \,.
\end{equation}
As $L$ grows, these two bounds provide a tight characterization of the scaling $\Theta(\sqrt{\log L/L})$ of the statistical threshold for this problem. Moreover, once $L\geq 11$, the quantity in \eqref{eq-info-upper-bound} is smaller than $1$, and thus there is a computationally inefficient algorithm that is superior to PCA applied to a single frequency. However, as for angular synchronization, it remained unknown whether this algorithm could be made efficient, and more generally what the limitations on computationally efficient algorithms are in this setting. Consequently, this motivates the following question for group synchronization problems in general:
\begin{question}
    Does estimation by an efficient algorithm become possible at a lower signal-to-noise ratio compared to a single-frequency model?
\end{question}
Using non‑rigorous statistical physics methods and numerical simulations, \cite{PWBM18b} initially predicted that the additional frequencies would not lower the computational threshold. A more recent rigorous analysis \cite{YWF25} confirmed the information-theoretic limits by deriving a replica formula for the asymptotic mutual information in the multi‑frequency model. Their analysis of the replica solution also led to conjectured phase transitions for computationally efficient algorithms. However, these predictions rely on the conjectured optimality of approximate message passing (AMP) algorithms, which is known not to capture the true computational barrier in all settings (see e.g., \cite{WEM19}).

In \cite{KBK24+}, computational lower bounds were derived using the low‑degree polynomial framework, showing that for synchronization over $\mathbb S$, the BBP threshold $\lambda>1$ cannot be surpassed by efficient algorithms when the number of frequencies $L$ is any fixed constant. A key limitation of their approach, which relies on reducing the problem to a detection task, is that it inherently restricts the analysis to the regime $L=O(1)$ (see Remark~\ref{rmk-reduce-to-detection} for further discussion). Thus, (as stated again in \cite{BKMR25+}) it remains an intriguing question whether the BBP threshold can be surpassed by efficient algorithms when the number of frequencies $L=\omega(1)$. Before presenting our main result for synchronization over $\mathbb S$, we first formalize the concept of estimation.
\begin{DEF}{\label{def-weak-recovery}}
    We say an estimator $\mathcal X:=\mathcal X(\bm Y_1,\ldots,\bm Y_L) \in \mathbb R^{n*n}$ achieves \emph{weak recovery}, if $\mathcal X$ is Hermitian (i.e., $\mathcal X=\mathcal X^*$)  and 
    \begin{align}{\label{eq-def-weak-recovery}}
        \mathbb E_{\Pb}\Bigg[ \frac{ \langle \mathcal X, \bm x \bm x^{*} \rangle }{ \| \mathcal X \|_{\Fop} \| \bm x \bm x^{*} \|_{\Fop} } \Bigg] \geq c \mbox{ for some constant } c>0 \,.
    \end{align}
\end{DEF}

The main result in this work significantly improves the result in \cite{KBK24+}, suggesting that the BBP threshold represents an inherent computational barrier of all polynomial-time algorithms as long as the number of frequencies $L=n^{o(1)}$.
\begin{thm}{\label{MAIN-THM-informal}}
    Assuming the low-degree conjecture (see Section~\ref{subsec:low-degree-conjecture} for details), when $L=n^{o(1)}$ and $\lambda<1$, any algorithm for weak recovery requires runtime at least $\exp(n^{\Omega(1)})$.
\end{thm}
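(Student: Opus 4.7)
The plan is to work within the low-degree polynomial framework for \emph{estimation} (in the spirit of \cite{KWB22, MW25}) rather than the detection-based reduction of \cite{KBK24+}, and to push the analysis through up to $L=n^{o(1)}$ by a careful Fourier--combinatorial bookkeeping. Write $\Pb$ for the planted law~\eqref{eq-def-multi-frequency-group-syncron} and $\Qb$ for the null law in which $\bm Y_1,\ldots,\bm Y_L$ are i.i.d.\ $\mathsf{GUE}(n)$. By Definition~\ref{def-weak-recovery} and an entrywise orthogonal decomposition of the matrix-valued $L^2$ error, it suffices to show that for $D=n^{\delta}$ and every $\lambda<1$,
\[
\sum_{a,b\in[n]}\bigl\|\Pi_{\le D}[\bm x_a\overline{\bm x}_b]\bigr\|_{L^2(\Pb)}^{2} \;=\; o(n^{2}),
\]
where $\Pi_{\le D}$ denotes the $L^2(\Pb)$-projection onto the subspace of polynomials of total degree $\le D$ in $(\bm Y_1,\ldots,\bm Y_L)$. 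Since $\|\bm x\bm x^{*}\|_{\Fop}^{2}=n^{2}$, this forces the normalised correlation of any degree-$D$ Hermitian estimator with $\bm x\bm x^{*}$ to vanish, so the low-degree conjecture of Section~\ref{subsec:low-degree-conjecture} promotes the result to a runtime lower bound of $\exp(n^{\Omega(1)})$.

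To control each projection I would expand in the orthonormal complex-Hermite basis of $(\bm W_1,\ldots,\bm W_L)$. Each basis element $H_{\alpha}$ is naturally indexed by a \emph{frequency-labeled} (signed) multigraph $G_{\alpha}$ on $[n]$, whose edges carry a frequency $k\in[L]$ and an orientation tracking complex conjugation. Substituting $\bm Y_k=\tfrac{\lambda}{\sqrt n}\bm x^{(k)}(\bm x^{(k)})^{*}+\bm W_k$ and using independence of $\bm W$ and $\bm x$, the coefficient of $H_{\alpha}$ factorises into a scalar piece of size $(\lambda/\sqrt n)^{e(G_{\alpha})}$ and a pure signal expectation over $\bm x$. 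Since $\bm x_j=e^{i\varphi_j}$ with $\varphi_j$ i.i.d.\ uniform on $[0,2\pi)$, each vertex factor is a character integral which vanishes unless the signed sum of incident edge-frequencies equals the external ``charge'' at that vertex --- namely $0$ at inner vertices, $+1$ at $a$, and $-1$ at $b$. Only \emph{balanced} frequency-labeled multigraphs $G\in\mathcal G_{a,b}$ survive, each contributing an amount of order $\lambda^{2e(G)}\,n^{v(G)-e(G)}$ after summing over embeddings into $[n]$.

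The technical heart of the proof is a combinatorial bound on $\sum_{G\in\mathcal G_{a,b}}\lambda^{2e(G)}n^{v(G)-e(G)}$ that remains convergent throughout the regime $L=n^{o(1)}$ and $e(G)$ up to $D=n^{\delta}$. Flow conservation at every vertex of $G$ forces $v(G)\le e(G)$ on the inner component, so the factor $n^{v-e}$ is at most~$1$; the real question is to control both the topological count of multigraphs and the number of admissible frequency labelings. A naive $L^{e(G)}$ enumeration of the labels, as suffices in~\cite{KBK24+} for $L=O(1)$, clearly fails here. The idea is to decompose each balanced labeled multigraph into signed ``flow cycles'' and to observe that the vertex-balance constraints tie the incident labels tightly together, so that the number of admissible labelings of a given topological skeleton grows only sub-exponentially in $e(G)$, of the form $L^{o(e)}$. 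This reduces the per-edge growth of the sum to $C\lambda^{2}L^{o(1)}$, which is strictly less than $1$ once $\lambda<1$ and $L=n^{o(1)}$; summing the resulting geometric series over $e\le D$ and $(a,b)\in[n]^{2}$ then yields $o(n^{2})$, completing the argument.

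The main obstacle is precisely this combinatorial estimate. The argument of~\cite{KBK24+} cannot be reused: the detection-based approach they use is rendered vacuous once $L\ge 11$, where the information-theoretic threshold~\eqref{eq-info-upper-bound} already drops below $\lambda=1$, and their frequency-label enumeration is too crude for the regime of interest. The new ingredient required is a refined counting lemma for balanced frequency-labeled multigraphs showing that flow conservation shrinks the effective label freedom to $L^{o(e)}$, uniformly in the edge count. I expect the cleanest route to be a generating-function bound summing over elementary balanced cycles and controlling compositions through a contraction-type argument; any slack at this step would propagate directly into a narrower range of admissible $L$, so sharpness here is essential.
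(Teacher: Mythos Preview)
Your route differs substantially from the paper's, and the combinatorial lemma you place at its heart is false as stated. You claim that flow conservation forces the number of admissible frequency labelings of a skeleton with $e$ edges to be $L^{o(e)}$. But balance at the $v$ vertices imposes only $v-1$ independent linear constraints on the $e$ edge labels, so the solution space has dimension equal to the cycle rank $r=e-v+1$, and the number of labelings is of order $L^{r}$. For multigraphs with many parallel edges or loops one has $r=\Theta(e)$, hence $L^{\Theta(e)}$ labelings, not $L^{o(e)}$; your ``per-edge growth $C\lambda^{2}L^{o(1)}$'' conclusion does not follow. What actually tames such terms is the trade-off between $L^{r}$ and the embedding factor $n^{v-e}$, giving a net $(L/n)^{r}$ which is small because $L=n^{o(1)}$ --- but you never invoke this mechanism, and once you do you must still count unlabeled skeletons, where any crude $C^{e}$ bound would only give hardness for $\lambda<1/\sqrt{C}$ rather than the sharp $\lambda<1$. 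A smaller issue: the heuristic in Section~\ref{subsec:low-degree-conjecture} concerns \emph{detection}, while you bound an \emph{estimation} quantity, so strictly you would need the separate estimation heuristic of \cite{SW22,MW25} to promote your bound to a runtime statement.

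The paper avoids graph combinatorics altogether. It (i)~bounds only the detection advantage, showing $\mathsf{Adv}_{\le D}(\Pb\|\Qb)^{2}\le\exp(O(L))$ by a Lindeberg interpolation that swaps the trigonometric sums $U_{\ell},V_{\ell}$ for i.i.d.\ Gaussians one coordinate at a time (Lemmas~\ref{lem-bound-low-deg-Adv} and~\ref{lem-interpolate-F-t-F-t+1}); (ii)~proves a fine-grained algorithmic-contiguity theorem (Theorem~\ref{thm-alg-contiguity}) under which even this growing advantage rules out efficient tests with $\Omega(1)$ power and type-II error $e^{-\omega(L)}$; and (iii)~reduces weak recovery to exactly such a lopsided test by injecting independent GUE noise to split each $\bm Y_{\ell}$ into two independent copies, one used to estimate and the other to verify (Lemma~\ref{lem-reduce-to-one-sided-test}). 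The interpolation is what extends the analysis to $L=n^{o(1)}$ without any labeling enumeration, and the contiguity step is what allows an advantage bound that is merely $\exp(O(L))$ rather than $O(1)$.
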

\begin{remark}{\label{rmk-reduce-to-detection}}
    The work \cite{KBK24+} provide evidence of the hardness for estimation by considering the easier detection problem. Precisely, let $\Pb=\Pb_{n,L,\lambda}$ be the law of $(\bm Y_1,\ldots,\bm Y_L)$ as in \eqref{eq-def-multi-frequency-group-syncron}, and let $\Qb=\Qb_{n,L}$ be the law of $(\bm W_1,\ldots,\bm W_L)$. The authors in \cite{KBK24+} shows that (assuming the low-degree conjecture) it is impossible for an efficient algorithm to distinguish $\Pb$ and $\Qb$ provided that $\lambda<1$ and $L=O(1)$. We would like to remark that this approach fails when $L=\omega(1)$, as in this case simply checking the Frobenius norm $(\|\bm Y_1\|_{\Fop},\ldots,\|\bm Y_L\|_{\Fop})$ could distinguish $\Pb$ and $\Qb$ for any constant $\lambda>0$. Nevertheless, it is still possible that when $L=n^{o(1)}$ and $\lambda<1$, all efficient algorithms cannot distinguish $\Pb$ from $\widetilde{\Qb}$, where under $\widetilde{\Qb}$ we have
    \begin{align}
        \bm Y_1=\frac{\lambda}{\sqrt{n}} \bm x_1 \bm x_1^{*} + \bm W_1,\ \ldots,\ \bm Y_L =\frac{\lambda}{\sqrt{n}} \bm x_L \bm x_L^{*} + \bm W_L \,,
    \end{align}
    and $\bm x_1,\ldots,\bm x_L$ are independent copies of $\bm x$. Providing computational lower bound for the detection problem between $\Pb$ and $\widetilde{\Qb}$ warrants a dedicated investigation, which we leave for future work.
\end{remark}

\begin{remark}
    In \cite{GZ19}, the authors proposed a heuristic method based on a variant of the AMP algorithm for multi-frequency angular synchronization. Their numerical results, obtained for $n=100$ and $L=32$, suggest that their method may surpass the spectral (BBP) threshold. Our analysis implies that for such success to be general, the number of frequencies $L$ must grow at least polynomially with the dimension $n$. A natural question, therefore, is whether there exists a critical scaling $\mathscr L_n$ such that efficient algorithms can outperform the BBP threshold if and only if $L \gg \mathscr L_n$. Determining this precise threshold would likely require substantial new technical insights, so we also leave this as an open problem for future work.
\end{remark}

\subsection{Overview of proof strategy}{\label{subsec:proof-strategy}}

This subsection provides a high-level overview of the proof for our main result, Theorem~\ref{MAIN-THM-informal}. As mentioned in Remark~\ref{rmk-reduce-to-detection}, simply considering the detection task between $\Pb$ and $\Qb$ is not sufficient for our purpose, as we are working in a regime that detection between $\Pb$ and $\Qb$ can be solved efficiently but recovery under $\Pb$ is believed to be impossible by efficient algorithms. Our proof is motivated by a series of recent progress \cite{DD23, GHL26+}, which exploits the fact that recovery is in fact considerably harder than detection \emph{statistically}, in the sense that recovery requires the chi-square divergence $\chi^2(\Pb\|\Qb)$ to be at least $e^{\Theta(n)}$, whereas detection only requires the chi-square divergence to be $\omega(1)$. Intuitively, this stems from the fact that a randomly guessed $\widehat{\bm x}$ has a non-vanishing correlation with the true signal $\bm x$ with probability only $e^{-\Theta(n)}$.

The main contribution of this work is to develop a (weaker) \emph{algorithmic analogue} of the arguments in \cite{DD23, GHL26+} within the low-degree polynomial framework. Informally speaking, we show that even a mild bound of the low-degree advantage (defined precisely in \eqref{eq-def-low-deg-Adv}) is sufficient to rule out all efficient recovery algorithms. We feel that this approach is not only effective for the present problem but also constitutes a general and easily implementable methodology that may be applied to a broader class of problems. The proof is organized into three key steps:
\begin{enumerate}
    \item[(1)] \underline{Reducing to a detection problem with lopsided success probability.} Inspired by \cite{DHSS25}, we will use a reduction argument to show that if weak recovery can be achieved efficiently, then we can find an efficient test between $\Pb$ and $\Qb$ with non-vanishing type-I accuracy and exponentially decaying type-II error (see Lemma~\ref{lem-reduce-to-one-sided-test} for details). 
    \item[(2)] \underline{Providing a mild bound on low-degree advantage via interpolation.} Using a delicate Lindeberg's interpolation argument, we will show that the low-degree advantage $\mathsf{Adv}_{\leq D}(\Pb\|\Qb)$ is \emph{not too large} (see Lemma~\ref{lem-bound-low-deg-Adv} for details).
    \item[(3)] \underline{Arguing by contradiction via algorithmic contiguity}. We will carry out a refined analysis of the algorithmic contiguity framework in \cite{Li25}, which shows that (assuming low-degree conjecture) a mild control on the low-degree advantage suffices to rule out all tests with lopsided success probability (see Theorem~\ref{thm-alg-contiguity} for details), thus contradicting with Step~(1).
\end{enumerate}

\subsection{Notation}{\label{subsec:notations}}
 
We record in this subsection some notation conventions. For two probability measures $\mathbb P$ and $\mathbb Q$, we denote the total variation distance between them by $\mathsf{TV}(\mathbb P,\mathbb Q)$. The chi-squared divergence from $\Pb$ to $\Qb$ is defined as $\chi^2(\Pb \| \Qb)= \mathbb E_{\mathbf{X}\sim\Qb}[ (\frac{\mathrm{d}\Pb}{\mathrm{d}\Qb}(\mathbf X))^2 ]$. For a matrix or a vector $M$, we will use $M^{\top}$ to denote its transpose, and we denote $M^*=\overline{M}^{\top}$. For a $k*k$ matrix $M$, let $\operatorname{det}(M)$ and $\operatorname{tr}(M)$ be the determinant and trace of $M$, respectively. Denote $M \succ 0$ if $M$ is positive definite and $M \succeq 0$ if $M$ is semi-positive definite. In addition, if $M=M^*$ is Hermitian we let $\varsigma_1(M) \geq \varsigma_2(M) \geq \ldots \geq \varsigma_k(M)$ be the eigenvalues of $M$. Denote by $\mathrm{rank}(M)$ the rank of the matrix $M$. Denote $\mathsf{O}(m)$ to be the set of all $m*m$ orthogonal matrices, and denote $\mathsf{SO}(m)=\{ M\in \mathsf{O}(m):\mathrm{det}(M)=1 \}$. For two $k*k$ complex matrices $M_1$ and $M_2$, we define their inner product to be
\begin{align*}
    \big\langle M_1,M_2 \big\rangle:=\sum_{i,j=1}^k M_1(i,j) \overline{M}_2(i,j) \,.
\end{align*}
In particular, if $M_1,M_2$ are Hermitian matrices then $\langle M_1,M_2 \rangle \in \mathbb R$. We also define the Frobenius norm and operator norm respectively by
\begin{align*}
    \| M \|_{\operatorname{F}} = \mathrm{tr}(MM^{*})^{\frac{1}{2}} =  \langle M,M \rangle^{\frac{1}{2}}, \quad
    \| M \|_{\operatorname{op}} = \varsigma_1(M M^{*})^{\frac{1}{2}} \,.
\end{align*}
We will use $\mathbb{I}_{k}$ to denote the $k*k$ identity matrix (and we drop the subscript if the dimension is clear from the context). Similarly, we denote $\mathbb{O}_{k*l}$ the $k*l$ zero matrix and denote $\mathbb{J}_{k*l}$ the $k*l$ matrix with all entries being 1. We will abbreviate $\mathbb O_k = \mathbb O_{k*k}$ and $\mathbb J_k=\mathbb J_{k*k}$.  

For any two positive sequences $\{a_n\}$ and $\{b_n\}$, we write equivalently $a_n=O(b_n)$, $b_n=\Omega(a_n)$, $a_n\lesssim b_n$ and $b_n\gtrsim a_n$ if there exists a positive absolute constant $c$ such that $a_n/b_n\leq c$ holds for all $n$. We write $a_n=o(b_n)$, $b_n=\omega(a_n)$, $a_n\ll b_n$, and $b_n\gg a_n$ if $a_n/b_n\to 0$ as $n\to\infty$. We write $a_n =\Theta(b_n)$ if both $a_n=O(b_n)$ and $a_b=\Omega(b_n)$ hold.

\section{Average-case complexity from low-degree polynomials}{\label{sec:low-degree-framework}}

The low-degree polynomial framework first emerged from the works of \cite{BHK+19, HS17, HKP+17, Hopkins18} and has since been refined and extended in various directions. It has found applications across a broad spectrum of problems, including detection and estimation tasks such as planted clique, planted dense subgraph, community detection, sparse PCA, and graph alignment (see \cite{HS17, HKP+17, Hopkins18, KWB22, SW22, BEH+22, DMW25, DKW+22, MW25, KMW24, DDL25, CDGL24+}), optimization problems like finding maximal independent sets in sparse random graphs \cite{GJW24, Wein22}, refutation problems such as certification of RIP and lift-monotone properties of random regular graphs \cite{WBP16, BKW20, KY24} and constraint satisfaction problems such as random $k$-SAT \cite{BH22}. Additionally, it is conjectured in \cite{Hopkins18} that the failure of degree-$D$ polynomials implies the failure of all ``robust'' algorithms with running time $n^{\widetilde{O}(D)}$ (here $\widetilde{O}$ means having at most this order up to a $\operatorname{poly} \log n$ factor). However, to prevent the readers from being overly optimistic for this conjecture, we point out a recent work \cite{BHJK25} that finds a counterexample for this conjecture. We therefore clarify that the low-degree framework is expected to be optimal for a certain, yet imprecisely defined, class of ``high-dimensional'' problems. Despite these important caveats, we still believe that analyzing low-degree polynomials remains highly meaningful for our setting, as it provides a benchmark for robust algorithmic performance. We refer the reader to the survey \cite{Wein25+} for a more detailed discussion of these subtleties.

\subsection{Low-degree advantage and low-degree heuristic}{\label{subsec:low-degree-conjecture}}

In this subsection, we will focus on applying the framework of low-degree polynomials in the context of high-dimensional hypothesis testing problems. To be more precise, consider the hypothesis testing problem between two probability measures $\Pb$ and $\Qb$ based on the sample $\mathsf Y \in \mathbb R^{N}$. We will be especially interested in asymptotic settings where $N=N_n, \Qb=\Qb_n, \Pb=\Pb_n, \mathsf Y=\mathsf Y_n$ scale with $n$ as $n \to \infty$ in some prescribed way. The standard low-degree polynomial framework primarily focus on the following notions on strong and weak detection.
\begin{defn}[Strong/weak detection]{\label{def-strong-weak-detection}}
    We say an algorithm $\mathcal A$ that takes $\mathsf Y$ as input and outputs either $0$ or $1$ achieves
    \begin{itemize}
        \item {\bf strong detection}, if the sum of type-I and type-II errors $\Qb(\mathcal A(\mathsf Y)=1) + \Pb(\mathcal A(\mathsf Y)=0)$ tends to $0$ as $n\to \infty$. 
        \item {\bf weak detection}, if the sum of type-I and type-II errors is uniformly bounded above by $1-\epsilon$ for some fixed $\epsilon>0$.
    \end{itemize}
\end{defn}

Roughly speaking, the low-degree polynomial approach focuses on understanding the capabilities and limitations of algorithms that can be expressed as low-degree polynomial functions of the input variables (in our case, the entries of $\mathsf Y$). To be more precise, let $\mathcal P_D=\mathcal P_{n,D}$ denote the set of polynomials from $\mathbb R^{N_n}$ to $\mathbb R$ with degree no more than $D$. With a slight abuse of notation, we will often refer to ``a polynomial'' to mean a sequence of polynomials $f=f_n \in \mathcal P_{n,D}$; the degree $D=D_n$ of such a polynomial may scale with $n$. As suggested by \cite{Hopkins18}, the key quantity is the low-degree advantage
\begin{equation}{\label{eq-def-low-deg-Adv}}
    \mathsf{Adv}_{\leq D}(\Pb\|\Qb):= \sup_{f \in \mathcal P_{D}} \left\{ \frac{ \mathbb E_{\Pb}[f] }{ \sqrt{ \mathbb E_{\Qb}[f^2] } } \right\} \,.
\end{equation}
The low degree heuristic, proposed in \cite{Hopkins18}, can be summarized as follows.
\begin{conjecture}[Low-degree conjecture]{\label{conj-low-deg}}
    For ``natural'' high-dimensional hypothesis testing problems between $\Pb$ and $\Qb$, the following statements hold.
    \begin{enumerate}
        \item[(1)] If $\mathsf{Adv}_{\leq D}(\Pb\|\Qb)=O(1)$ as $n\to\infty$, then there exists a constant $C$ such that no algorithm with running time $N^{D/(\log N)^C}$ that achieves strong detection between $\Pb$ and $\Qb$. 
        \item[(2)] If $\mathsf{Adv}_{\leq D}(\Pb\|\Qb)=1+o(1)$ as $n\to\infty$, then there exists a constant $C$ such that no algorithm with running time $N^{D/(\log N)^C}$ that achieves weak detection between $\Pb$ and $\Qb$. 
    \end{enumerate}
\end{conjecture}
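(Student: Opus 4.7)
The statement labeled Conjecture~\ref{conj-low-deg} is presented as a Heuristic, and as the excerpt itself notes, the recent work \cite{BHJK25} produces an outright counterexample in full generality. Any proof attempt must therefore begin by specifying a precise subclass of ``natural'' high-dimensional hypothesis testing problems — for instance, additive-Gaussian-noise problems with a planted low-rank signal whose prior is invariant under a symmetry group — for which the implication can plausibly be established. So the plan is not to attempt a fully general proof, but to outline the kind of structural argument that makes the heuristic rigorous within such a restricted class.

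Within a restricted class like the spiked matrix models relevant to this paper, I would proceed by a two-step reduction. First I would argue that the canonical polynomial-time algorithms for spiked matrix problems — spectral methods, power iteration, approximate message passing, and constant-degree pseudo-expectations from the sum-of-squares hierarchy — can all be expressed (possibly after smoothing and truncation) as polynomials in the input of degree $\widetilde O(\log n)$, so that the advantage $\mathsf{Adv}_{\leq D}$ directly upper bounds the achievable signal-to-noise separation of each of these algorithmic families. Second, I would invoke the by-now-standard correspondence between low-degree polynomial lower bounds and sum-of-squares lower bounds in the planted-problem setting (see \cite{KWB22}, \cite{BHK+19}, \cite{Hopkins18}), which would extend the lower bound from polynomials of degree $D$ to pseudo-expectations of the same degree, and thus to all algorithms faithfully captured by the SoS hierarchy at that level.

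To upgrade this conditional statement into an actual proof covering all algorithms of running time $N^{D/(\log N)^{C}}$, one would need a simulation theorem of the form: every ``robust'' algorithm running in time $T$ can be approximated by a polynomial of degree $\widetilde O(\log T)$ whose advantage is within a constant factor of that of the algorithm. In Boolean-input settings such theorems are available via Nisan--Szegedy and Linial--Mansour--Nisan approximation; the Gaussian-input analogue would proceed by Hermite expansion combined with hypercontractivity, with robustness formalized as stability under small Gaussian resampling of the entries of $\mathsf Y$. The argument would then close by combining the Hermite approximation with the $O(1)$ (resp.\ $1+o(1)$) bound on $\mathsf{Adv}_{\leq D}$ to rule out, respectively, strong and weak detection by all robust algorithms in the allotted time.

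The main obstacle, and the reason the statement must remain a heuristic, is making the word ``natural'' precise: the robustness hypothesis must be strong enough to exclude the BHJK counterexample (which crucially uses delicate non-robust features of the input) while still broad enough to cover the spectral and message-passing algorithms that are the actual targets of the conjecture. Identifying the right notion, and proving the corresponding Gaussian simulation theorem, is the hard step and is currently open. For the purposes of the present paper this obstacle is simply sidestepped: the conjecture is taken as a black-box assumption and all subsequent hardness statements (in particular Theorem~\ref{MAIN-THM-informal}) are conditional on it.
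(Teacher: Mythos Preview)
Your assessment is correct and aligns with the paper's own treatment: Heuristic~\ref{conj-low-deg} is not proved anywhere in the paper but is explicitly adopted as a black-box assumption, with the surrounding discussion (citing \cite{Hopkins18}, \cite{KWB22}, and the counterexample \cite{BHJK25}) serving only as motivation rather than argument. The paper offers no proof sketch, restricted-class formulation, or simulation-theorem outline of the kind you describe; your proposal actually goes further than the paper in articulating what a rigorous version might require, but this additional content is speculative and not needed for the paper's downstream results, which are all conditional on the heuristic.
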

Motivated by \cite[Hypothesis~2.1.5 and Conjecture~2.2.4]{Hopkins18} as well as the fact that low-degree polynomials capture the best known algorithms for a wide variety of statistical inference tasks, this heuristic appears to hold for distributions of a specific form that frequently arises in high-dimensional statistics. For further discussion on what types of distributions are suitable for this framework, we refer readers to \cite{Hopkins18, Kunisky21, KWB22, ZSWB22, Wein25+}. We also note that while \cite[Hypothesis 2.1.5]{Hopkins18} formally requires the measures $\Pb$ and $\Qb$ to be fully symmetric (i.e., invariant under any permutation of entries), several recent works (e.g., \cite{MW25b, KVWX23, DDL25}) showed that this framework remains applicable under weaker symmetry assumptions, as is the case for the problem studied in this work.

\subsection{Algorithmic contiguity from low-degree heuristic}{\label{subsec:alg-contiguity}}

The framework in Section~\ref{subsec:low-degree-conjecture} provides a useful tool for probing the computational feasibility of both strong and weak detection. However, as discussed in \cite{Li25}, the failure of strong detection alone is insufficient for many purposes, particularly when we aim to construct reductions between statistical models in regimes where detection is computationally possible. To address this, \cite{Li25} introduces a stronger framework that rules out all \emph{one-sided tests} (i.e., tests satisfying $\Pb(\mathcal A=1)\geq \Omega(1)$ and $\Qb(\mathcal A=1)=o(1)$). Under the low-degree heuristic, it was shown that a bounded low-degree advantage not only rules out strong detection but also precludes any efficient one-sided testing algorithm. The core of this subsection is to provide fine-grained estimates of the arguments in \cite{Li25}, thereby elucidating the precise interplay between the low-degree advantage, the testing error, and the algorithm's running time.

\begin{DEF}{\label{def-one-side-test}}
    We say a test $\mathcal A$ that takes $\mathsf Y$ as input and outputs either $0$ or $1$ is a $(\mathrm T;c;\epsilon)$-test between $\Pb$ and $\Qb$, if the following conditions holds:
    \begin{enumerate}
        \item[(1)] The test $\mathcal A$ can be calculated in time $N^{\mathrm T}$;
        \item[(2)] The type-I accuracy satisfies $\Pb(\mathcal A(\mathsf Y)=1) \geq c$;
        \item[(3)] The type-II error satisfies $\Qb(\mathcal A(\mathsf Y)=1) \leq \epsilon$.
    \end{enumerate}
\end{DEF}

\begin{thm}{\label{thm-alg-contiguity}}
    Assuming Heuristic~\ref{conj-low-deg} holds. Consider the hypothesis testing problem between $\Pb=\Pb_n$ and $\Qb=\Qb_n$. If we have $\mathsf{Adv}_{\leq D_n}(\Pb_n \| \Qb_n)^2 \leq \Delta_n$ for some $\Delta_n>1$, then there exists a constant $C>0$ such that there is no $(\mathrm{T}_n;c_n;\epsilon_n)$-test between $\Pb_n$ and $\Qb_n$, such that (recall that $N_n$ is the dimension of $\mathsf Y_n$)
    \begin{equation}{\label{eq-assumption-time-error}}
        \mathrm{T}_n \leq \frac{ D_n-\log(N_n^2 \Delta_n^2)^C }{ \log(N_n^2 \Delta_n^2)^{C} }, \quad c_n = \Omega(1), \quad \epsilon_n \Delta_n =o(1) \,.
    \end{equation}
\end{thm}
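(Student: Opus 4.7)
The plan is to argue by contradiction, amplifying the hypothetical $(\mathrm{T}_n; c_n; \epsilon_n)$-test $\mathcal A_n$ into a strong detection procedure for an auxiliary pair of distributions whose low-degree advantage stays $O(1)$, and then invoking Heuristic~\ref{conj-low-deg}(1) to obtain a contradiction. The amplification uses two parameters: a mixture weight $p_n$ that damps the low-degree advantage, and a sample count $k_n$ that boosts the type-I accuracy.

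First, I would define
\begin{align*}
    \tilde{\Pb}_n := \bigl( p_n \Pb_n + (1-p_n)\Qb_n \bigr)^{\otimes k_n}, \quad \tilde{\Qb}_n := \Qb_n^{\otimes k_n} \,,
\end{align*}
and let $\tilde{\mathcal A}_n$ be the OR-amplified test that runs $\mathcal A_n$ on each of the $k_n$ coordinates and outputs $1$ iff some component test does. A union bound gives $\tilde{\Qb}_n(\tilde{\mathcal A}_n=1) \leq k_n\epsilon_n$, while coordinate-wise independence yields $\tilde{\Pb}_n(\tilde{\mathcal A}_n=1) \geq 1-(1-p_n c_n)^{k_n}$, and $\tilde{\mathcal A}_n$ has total runtime $k_n N_n^{\mathrm{T}_n}$.

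The key estimate controls $\mathsf{Adv}_{\leq D_n}(\tilde{\Pb}_n\|\tilde{\Qb}_n)^2$. Writing $L_n=\mathrm d\Pb_n/\mathrm d\Qb_n$, the mixture likelihood ratio is $1+p_n(L_n-1)$. Decomposing $L_n-1$ into its degree-homogeneous orthogonal components in $L^2(\Qb_n)$ shows that its degree-$\leq D_n$ projection has squared $\Qb_n$-norm at most $1+p_n^2(\mathsf{Adv}_{\leq D_n}(\Pb_n\|\Qb_n)^2-1) \leq 1+p_n^2\Delta_n$. Tensorizing across the $k_n$ independent coordinates -- by expanding the product of likelihood ratios and noting that mean-zero degree-positive pieces make cross terms vanish -- yields
\begin{align*}
    \mathsf{Adv}_{\leq D_n}(\tilde{\Pb}_n\|\tilde{\Qb}_n)^2 \leq \bigl(1+p_n^2\Delta_n\bigr)^{k_n} \leq \exp\bigl(k_n p_n^2\Delta_n\bigr) \,.
\end{align*}
Setting $k_n \asymp \sqrt{\Delta_n/\epsilon_n}$ and $p_n \asymp 1/\sqrt{k_n\Delta_n}$, and using $\epsilon_n\Delta_n=o(1)$ and $c_n=\Omega(1)$, one verifies (i) $k_n p_n^2\Delta_n \asymp 1$ so the advantage is $O(1)$, (ii) $k_n\epsilon_n \asymp \sqrt{\epsilon_n\Delta_n}=o(1)$, and (iii) $k_n p_n c_n \asymp c_n(\epsilon_n\Delta_n)^{-1/4}=\omega(1)$; hence $\tilde{\mathcal A}_n$ achieves strong detection for $\tilde{\Pb}_n$ versus $\tilde{\Qb}_n$.

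Finally, applying Heuristic~\ref{conj-low-deg}(1) to the pair $(\tilde{\Pb}_n,\tilde{\Qb}_n)$ of ambient dimension $k_n N_n$ rules out every strong-detection algorithm with runtime $(k_n N_n)^{D_n/(\log k_n N_n)^{C'}}$. Comparing with the runtime $k_n N_n^{\mathrm T_n}$ of $\tilde{\mathcal A}_n$, taking logarithms, and absorbing $\log k_n$ into the polylogarithm using $k_n \leq \sqrt{\Delta_n/\epsilon_n} \leq \Delta_n/\sqrt{\epsilon_n\Delta_n}$, one extracts a lower bound of the form $\mathrm T_n \geq (D_n-\log(N_n^2\Delta_n^2)^C)/\log(N_n^2\Delta_n^2)^C$ for a suitable absolute constant $C$, contradicting \eqref{eq-assumption-time-error}. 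The main obstacle is the tensorization step: I must verify that the total-degree constraint $\sum_i d_i \leq D_n$ across the $k_n$ coordinates can be relaxed to per-coordinate constraints at no cost, which requires a careful orthogonality argument via degree-homogeneous decomposition. A secondary bookkeeping challenge is tracking the polylogarithmic exponent $C$ so that the precise form of \eqref{eq-assumption-time-error} is recovered.
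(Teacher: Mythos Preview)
Your amplification idea is sound and the low-degree calculation is correct: the mixture advantage bound $1+p_n^2\Delta_n$ and its tensorization to $(1+p_n^2\Delta_n)^{k_n}$ both go through exactly as you describe (the relaxation from $\sum_i d_i\le D_n$ to $d_i\le D_n$ is just dropping a constraint, since both $\tilde{\Pb}_n$ and $\tilde{\Qb}_n$ are product measures). This is a genuinely different construction from the paper's: the paper plants \emph{exactly one} $\Pb$-sample among $M$ null samples, obtains advantage $1+(\Delta-1)/M=1+o(1)$, and invokes Heuristic~\ref{conj-low-deg}(2) for weak detection, whereas you plant each coordinate independently with probability $p_n$ and aim for strong detection via Heuristic~\ref{conj-low-deg}(1).

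The gap is in your final runtime comparison, and it is not merely bookkeeping. With $k_n\asymp\sqrt{\Delta_n/\epsilon_n}$ you have $\log k_n\gtrsim\tfrac12\log(1/\epsilon_n)$, and the theorem places \emph{no} upper bound on $1/\epsilon_n$ beyond $\epsilon_n\Delta_n=o(1)$. In particular, in the paper's intended application $\epsilon_n=e^{-\Theta(n)}$ while $\log(N_n^2\Delta_n^2)=n^{o(1)}$, so $\log(k_nN_n)\asymp n$ and the Heuristic's threshold $(k_nN_n)^{D_n/(\log k_nN_n)^{C'}}$ collapses: your test's runtime $k_nN_n^{\mathrm T_n}$ is \emph{not} below it, and no contradiction follows. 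The ``absorption'' you propose, bounding $k_n\le\Delta_n/\sqrt{\epsilon_n\Delta_n}$, still leaves the uncontrolled factor $1/\sqrt{\epsilon_n\Delta_n}$, so you cannot extract a bound of the form \eqref{eq-assumption-time-error} that is free of $\epsilon_n$.

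The fix is to take $k_n$ only slightly larger than $\Delta_n$ --- say $k_n=\Delta_n r_n$ with $r_n\to\infty$ slowly enough that $r_n\le N_n$ and $r_n\Delta_n\epsilon_n=o(1)$ --- and $p_n\asymp 1/\sqrt{k_n\Delta_n}$. Then $k_np_n^2\Delta_n=O(1)$, $k_n\epsilon_n=o(1)$, and $k_np_nc_n\asymp c_n\sqrt{r_n}\to\infty$, so you still get strong detection with $O(1)$ advantage, but now $k_nN_n\le N_n^2\Delta_n$ and $\log(k_nN_n)$ is genuinely controlled by $\log(N_n^2\Delta_n^2)$. This is essentially the paper's choice $\Delta\ll M\le N\Delta$ translated to your setting; the paper keeps $M$ small precisely to control the ambient dimension in the final step.
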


\begin{remark}
    Note that if we take $\Delta_n=O(1)$ and $\epsilon_n=o(1)$, Theorem~\ref{thm-alg-contiguity} reduces to \cite[Theorem~11]{Li25}. Thus, Theorem~\ref{thm-alg-contiguity} can be viewed as a fine-grained extension of \cite[Theorem~11]{Li25}.
\end{remark}

The rest part of this section is devoted to the proof of Theorem~\ref{thm-alg-contiguity}. For the sake of brevity, we will only work with some fixed $n$ throughout the analysis and we simply denote $\Pb_n,\Qb_n,N_n, \Delta_n,c_n,\epsilon_n$ as $\Pb,\Qb,N,\Delta,c,\epsilon$, respectively. Our proof roughly follows the proof of \cite[Theorem~11]{Li25} but we need a more careful quantitative bound. Suppose on the contrary that there is an efficient algorithm $\mathcal A$ that takes $\mathsf Y$ as input and outputs either $0$ or $1$ with
\begin{equation}{\label{eq-contradict-assumption}}
    \Pb(\mathcal A(\mathsf Y)=1) \geq c=\Omega(1) \mbox{ and } \Qb(\mathcal A(\mathsf Y)=0) \geq 1-\epsilon \,.
\end{equation}
To this end, we choose $M=M_n$ such that
\begin{equation}{\label{eq-def-M-n}}
    \Delta \ll M \leq N \Delta, \quad \epsilon M = o(1) \,.
\end{equation}
We first briefly explain our proof ideas. Without losing of generality, we may assume that 
\begin{equation}{\label{eq-D-geq-log-MN}}
    D \geq (\log(N^2\Delta^2))^C \geq (\log(MN))^C \,.
\end{equation} 
The crux of our argument is to consider the following \emph{hidden informative sample} problem.

\begin{defn}{\label{def-hidden-sample}}
    Consider the following hypothesis testing problem: we need to determine whether a sample $(\mathsf Y_1,\mathsf Y_2,\ldots, \mathsf Y_M)$ where each $\mathsf Y_i \in \mathbb R^N$ is generated by
    \begin{itemize}
        \item $\overline{\mathcal H}_0$: we let $\mathsf Y_1,\ldots,\mathsf Y_M$ to be independently sampled from $\Qb_n$.
        \item $\overline{\mathcal H}_1$: we first sample $\kappa \in \{ 1,\ldots,M \}$ uniformly at random, and (conditioned on the value of $\kappa$) we let $\mathsf Y_1,\ldots,\mathsf Y_M$ are independent samples with $\mathsf Y_\kappa$ generated from $\Pb$ and $\{ \mathsf Y_j: j \neq \kappa \}$ generated from $\Qb'$.
    \end{itemize}
    In addition, denote $\overline{\Pb}$ and $\overline{\Qb}$ to be the law of $(\mathsf Y_1,\ldots,\mathsf Y_M)$ under $\overline{\mathcal H}_1$ and $\overline{\mathcal H}_0$, respectively. 
\end{defn}

Note that the dimension of $\overline{\Pb},\overline{\Qb}$ equals $MN$. Assuming that \eqref{eq-contradict-assumption} holds, we see that
\begin{align}
    & \overline{\Qb}\Big( \big( \mathcal A(\mathsf Y_1), \ldots, \mathcal A(\mathsf Y_M) \big) = (0,\ldots,0) \Big) \geq 1-M\epsilon \overset{\eqref{eq-def-M-n}}{=} 1-o(1) \,; \label{eq-behavior-Qb-hidden-sample} \\
    & \overline{\Pb}\Big( \big( \mathcal A(\mathsf Y_1), \ldots, \mathcal A(\mathsf Y_M) \big) \neq (0,\ldots,0) \Big) \geq \Omega(1) \,. \label{eq-behavior-Pb-hidden-sample}
\end{align}
Thus, there is an algorithm that achieves weak detection between $\overline{\Pb}$ and $\overline{\Qb}$. In addition, this test can be calculated in time 
\begin{align*}
    M N^{\mathrm T} \overset{\eqref{eq-D-geq-log-MN}}{\leq} M^{D/\log(MN)^C} N^{\mathrm T} \overset{\eqref{eq-assumption-time-error}}{\leq} (MN)^{D/\log(MN)^C} \,.
\end{align*}
Our next result, however, shows the low-degree advantage $\mathsf{Adv}_{\leq D}( \overline{\Pb}\|\overline{\Qb} )$ is bounded by $1+o(1)$.

\begin{lemma}{\label{lem-low-deg-hardness-hidden-sample}}
    If $\mathsf{Adv}_{\leq D}( \Pb\|\Qb )^2=\Delta$, then $\mathsf{Adv}_{\leq D}( \overline{\Pb}\|\overline{\Qb} )=1+o(1)$.
\end{lemma}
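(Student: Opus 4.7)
\medskip

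The plan is to compute $\mathsf{Adv}_{\leq D}(\overline{\Pb}\|\overline{\Qb})$ by explicitly identifying the low-degree projection of the likelihood ratio $\overline L := d\overline{\Pb}/d\overline{\Qb}$ and then computing its $L^2(\overline{\Qb})$-norm. The key structural observation is that, because $\overline{\Qb}$ is the $M$-fold product of $\Qb$ while $\overline{\Pb}$ is the uniform mixture over the location $\kappa \in [M]$ of the informative sample, the likelihood ratio is additive:
\begin{equation*}
\overline L(\mathsf Y_1, \ldots, \mathsf Y_M) = \frac{1}{M} \sum_{\kappa=1}^{M} L(\mathsf Y_\kappa), \qquad L := \frac{d\Pb}{d\Qb}.
\end{equation*}
Once this is in hand, everything reduces to a clean orthogonality calculation.

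First I would fix an orthonormal basis $\{\psi_\alpha\}$ of $L^2(\Qb)$ with $\psi_\emptyset \equiv 1$ and $\deg(\psi_\alpha) = |\alpha|$, so that the tensor products $\prod_{\kappa=1}^{M} \psi_{\alpha_\kappa}(\mathsf Y_\kappa)$ form an orthonormal basis of $L^2(\overline{\Qb})$ with total joint degree $\sum_\kappa |\alpha_\kappa|$. Since each summand $L(\mathsf Y_\kappa)$ depends only on the block $\mathsf Y_\kappa$, its degree-$\leq D$ projection in the joint variable is the same as its intra-block projection, so linearity of projection yields
\begin{equation*}
\overline L^{\leq D}(\mathsf Y_1, \ldots, \mathsf Y_M) = \frac{1}{M} \sum_{\kappa=1}^{M} L^{\leq D}(\mathsf Y_\kappa).
\end{equation*}

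Second I would compute $\|\overline L^{\leq D}\|_{L^2(\overline{\Qb})}^2$ by expanding the square and exploiting independence of the blocks under $\overline{\Qb}$. Using $\mathbb E_{\Qb}[L^{\leq D}] = \langle L, 1\rangle_\Qb = 1$ and $\mathbb E_{\Qb}[(L^{\leq D})^2] = \mathsf{Adv}_{\leq D}(\Pb\|\Qb)^2 = \Delta$, the diagonal terms contribute $M \Delta$ and the $M(M-1)$ off-diagonal terms each contribute $1$, giving
\begin{equation*}
\mathsf{Adv}_{\leq D}(\overline{\Pb}\|\overline{\Qb})^2 = \big\|\overline L^{\leq D}\big\|_{L^2(\overline{\Qb})}^2 = \frac{M\Delta + M(M-1)}{M^2} = 1 + \frac{\Delta - 1}{M}.
\end{equation*}
Finally, the choice of $M$ in \eqref{eq-def-M-n} forces $\Delta \ll M$, so the right-hand side is $1 + o(1)$ and taking a square root gives the claim.

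I do not expect a serious technical obstacle: the proof is essentially a second-moment computation on a product space, and the additive form of $\overline L$ makes the cross-terms collapse onto the trivial basis element. The only mildly subtle point is the justification that the low-degree projection commutes with the block decomposition, which is why I would work explicitly with the tensor-product orthonormal basis rather than pushing things through a generic density argument. If one worried about the integrability of $L$ itself (as opposed to $L^{\leq D}$), the same argument can be phrased purely in terms of the Riesz-representer of the functional $f \mapsto \mathbb E_{\overline{\Pb}}[f]$ on $\mathcal P_D$ equipped with the $L^2(\overline{\Qb})$ inner product, with no appeal to $\overline L$ as an $L^2$ object.
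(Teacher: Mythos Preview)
Your proposal is correct and follows essentially the same approach as the paper: both exploit the additive form $\overline L = \frac{1}{M}\sum_\kappa L(\mathsf Y_\kappa)$ of the likelihood ratio together with a tensor-product orthonormal basis for $L^2(\overline{\Qb})$ to reduce $\mathsf{Adv}_{\leq D}(\overline{\Pb}\|\overline{\Qb})^2$ to a second-moment computation yielding $1 + (\Delta-1)/M$. Your presentation is in fact slightly cleaner---you compute $\|\overline L^{\leq D}\|^2$ directly by expanding the square, whereas the paper sums the squared Fourier coefficients $\mathbb E_{\overline{\Pb}}[\prod_i f_{\alpha_i}]^2$ and obtains the looser bound $1 + \Delta/M$---but the underlying argument is the same.
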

\begin{proof}
    Note that
    \begin{align}
        \frac{ \mathrm{d}\overline{\Pb} }{ \mathrm{d}\overline{\Qb} } (\mathsf Y_1,\ldots,\mathsf Y_M) = \frac{1}{M} \sum_{i=1}^{M} \frac{ \mathrm{d}\overline{\Pb}(\cdot \mid \kappa=i) }{ \mathrm{d}\overline{\Qb} } (\mathsf Y_1,\ldots,\mathsf Y_M) = \frac{1}{M} \sum_{i=1}^{M} \frac{ \mathrm{d}\Pb }{ \mathrm{d}\Qb } (\mathsf Y_i) \,.  \label{eq-explicit-form-LR-hidden-sample}
    \end{align}
    In addition, recall \eqref{eq-def-low-deg-Adv}. Define
    \begin{align*}
        \mathcal N_D=\big\{ f \in \mathcal P_D: f = 0 \mbox{ a.s. under } \Qb \big\} \,.
    \end{align*}
    It is straightforward to verify that $\mathcal N_D$ is a linear subspace of $\mathcal P_D$. Also, consider the inner product $\langle f,g \rangle= \mathbb E_{\Qb}[fg]$, denoting $\mathcal N_D^{\perp}$ to be the orthogonal space of $\mathcal N_D$ in $\mathcal P_D$, then $\mathcal N_D^{\perp}$ can be identified as a (finite-dimensional) Hilbert space where this inner product is non-degenerate (note that $\langle f,f \rangle>0$ for all $f \in \mathcal N_D^{\perp} \setminus \{0\}$). In addition, it is straightforward to check that $1 \in \mathcal N_D^{\perp}$ with $\langle 1,1 \rangle=1$. Thus, by applying the Schmidt orthogonalization procedure started with $1$, we can find a standard orthogonal basis $\{ f_{\alpha}: \alpha \in \Lambda \}$ of the space $\mathcal N_D^{\perp}$ and with $0\in\Lambda$ and $f_{0}=1$. Note that in some cases (e.g., when $\Qb$ is a product measure), this standard orthogonal basis have a closed form; however, for general $\Qb$ the explicit form of $\{ f_{\alpha} \}$ is often intractable. We first show that
    \begin{equation}{\label{eq-low-deg-Adv-trandsform}}
        \mathsf{Adv}_{\leq D}\big( \Pb\|\Qb \big) = \left( \sum_{\alpha\in\Lambda} \mathbb E_{\Pb}[ f_{\alpha}(\mathsf Y) ]^2 \right)^{\frac{1}{2}} \,.
    \end{equation}
    Indeed, as $\mathsf{Adv}_{\leq D}( \Pb\|\Qb )^2=\Delta$, we see that for all $f \in \mathcal N_D$ it must holds that $\mathbb E_{\Pb}[f]=0$ (since otherwise we will have $\frac{\mathbb E_{\Pb}[f]}{\mathbb E_{\Qb}[f^2]}=\infty$). Thus, we have
    \begin{align*}
        \mathsf{Adv}_{\leq D}\big( \Pb\|\Qb \big) = \max_{ f \in \mathcal N_D^{\perp} } \left\{ \frac{\mathbb E_{\Pb}[f]}{\mathbb E_{\Qb}[f^2]} \right\} \,.
    \end{align*}
    For any $f \in \mathcal N_D^{\perp}$, it can be uniquely expressed as
    \begin{align*}
        f=\sum_{ \alpha\in\Lambda } C_{\alpha} f_{\alpha} \,,
    \end{align*}
    where $C_{\alpha}$'s are real constants. Applying Cauchy-Schwartz inequality one gets
    \begin{align*}
        \frac{ \mathbb{E}_{\Pb}[f] }{ \sqrt{\mathbb{E}_{\Qb}[f^2]} } = \frac{ \sum_{ \alpha\in\Lambda } C_{\alpha} \mathbb{E}_{\mathbb{P}}[f_{\alpha}(\mathsf Y)] }{ \sqrt{ \sum_{ \alpha\in\Lambda } C_{\alpha}^2} } \leq \left( \sum_{\alpha\in\Lambda} \mathbb E_{\Pb} [f_{\alpha}(\mathsf Y)]^2 \right)^{1/2} \,,
    \end{align*}
    with equality holds if and only if $C_{\alpha} \propto \mathbb{E}_{\Pb}[f_{\alpha}]$. This yields \eqref{eq-low-deg-Adv-trandsform}. Now, note that $\overline{\Qb}=(\Qb)^{\otimes M}$ is a product measure of $\Qb$, there is a natural standard orthogonal basis under $\overline{\Qb}$, given by
    \begin{align*}
        \left\{ \prod_{i=1}^{M} f_{\alpha_i}(\mathsf Y_i): \alpha_i \in \Lambda, \sum_{i=1}^{M} \operatorname{deg}(f_{\alpha_i}) \leq D \right\} \,.
    \end{align*}
    Thus, similarly as in \eqref{eq-low-deg-Adv-trandsform}, we see that
    \begin{align}
        \Big( \mathsf{Adv}_{\leq D}\big( \overline{\Pb}\|\overline{\Qb} \big) \Big)^2 = \sum_{ \substack{ (\alpha_1,\ldots,\alpha_M): \alpha_i \in \Lambda \\ \sum_{i=1}^{M} \operatorname{deg}(f_{\alpha_i}) \leq D } } \mathbb E_{ \overline{\Pb} }\left[ \prod_{i=1}^{M} f_{\alpha_i}(\mathsf Y_i) \right]^2 \,. \label{eq-low-deg-Adv-overline-relax-1}
    \end{align}
    In addition, using \eqref{eq-explicit-form-LR-hidden-sample}, we see from direct calculation that
    \begin{align}
        \mathbb E_{ \overline{\Pb} }\left[ \prod_{i=1}^{M} f_{\alpha_i}(\mathsf Y_i) \right] &= \mathbb E_{ \overline{\Qb} }\left[ \prod_{i=1}^{M} f_{\alpha_i}(\mathsf Y_i) \cdot \frac{\mathrm{d}\overline{\Pb}}{\mathrm{d}\overline{\Qb}} (\mathsf Y_1,\ldots,\mathsf Y_M) \right] \nonumber \\
        &\overset{\eqref{eq-explicit-form-LR-hidden-sample}}{=} \frac{1}{M} \sum_{i=1}^M \mathbb E_{ \overline{\Qb} }\left[ \prod_{i=1}^{M} f_{\alpha_i}(\mathsf Y_i) \cdot \frac{\mathrm{d}\Pb}{\mathrm{d}\Qb} (\mathsf Y_i) \right] \nonumber \\
        &= \begin{cases}
            1 \,, & (\alpha_1,\ldots,\alpha_M) = (0,\ldots,0) \,; \\
            \frac{1}{M} \mathbb E_{\Pb}\big[ f_{\alpha_j}(\mathsf Y_j) \big] \,, & (\alpha_1,\ldots,\alpha_M) = (0,\ldots,0,\alpha_j,0,\ldots,0) \,; \\
            0 \,, & \mbox{otherwise} \,.
        \end{cases} \label{eq-cal-moment-hidden-sample}
    \end{align}
    Plugging \eqref{eq-cal-moment-hidden-sample} into \eqref{eq-low-deg-Adv-overline-relax-1}, we get that 
    \begin{align*}
        \eqref{eq-low-deg-Adv-overline-relax-1} &= 1 + \sum_{i=1}^{M} \sum_{ \alpha_i \in \Lambda\setminus\{ 0 \} } \left( \frac{1}{M} \mathbb E_{\Pb'}\big[ f_{\alpha_i}(\mathsf Y_i) \big] \right)^2 \\
        &\leq 1 + \frac{1}{M} \sum_{\alpha \in \Lambda}  \mathbb E_{\Pb'}\big[ f_{\alpha}(\mathsf Y_j) \big]^2 = 1 + \frac{1}{M} \cdot \Delta = 1+o(1) \,,
    \end{align*}
    where in the second equality we use \eqref{eq-low-deg-Adv-trandsform} and the assumption that $\mathsf{Adv}_{\leq D}( \Pb\|\Qb )^2=\Delta$, and the last equality follows from \eqref{eq-def-M-n}. This completes our proof.
\end{proof}

We can now finish the proof of Theorem~\ref{thm-alg-contiguity}.
\begin{proof}[Proof of Theorem~\ref{thm-alg-contiguity}]
    Suppose on the contrary that there is a $(\mathrm{T};c;\epsilon)$-test $\mathcal A$ satisfying Definition~\ref{def-one-side-test}. Consider the hypothesis testing problem stated in Definition~\ref{def-hidden-sample}. Using \eqref{eq-behavior-Qb-hidden-sample} and \eqref{eq-behavior-Pb-hidden-sample}, we see that there is an efficient algorithm that runs in time $(MN)^{D/\log(MN)^C}$ and achieves weak detection between $\overline{\Pb}$ and $\overline{\Qb}$, which contradicts with Lemma~\ref{lem-low-deg-hardness-hidden-sample} and Item~(2) in Heuristic~\ref{conj-low-deg}.
\end{proof}

\section{Proof of Theorem~\ref{MAIN-THM-informal}}{\label{sec:proof-main-thm}}

In this section we present the proof of Theorem~\ref{MAIN-THM-informal}.  Note that the problem becomes easier as $\lambda$ increases, so without loss of generality we may assume that
\begin{equation}{\label{eq-assumption-lambda}}
    \Theta(1)=\lambda<1-\Omega(1)
\end{equation}
throughout this section. Recall Definition~\ref{def-multi-frequency-group-synchronization}. The first step of our argument is to introduce external randomness by sampling $\bm Z_1,\ldots,\bm Z_L \sim \mathsf{GUE}(n)$ independent of $(\bm Y_1,\ldots,\bm Y_L)$. Denote $\Pb_{\bullet}$ be the joint law of $(\bm Y_1,\ldots,\bm Y_L) \sim \Pb$ and $(\bm Z_1,\ldots,\bm Z_L)$, and denote $\Qb_{\bullet}$ be the joint law of $(\bm Y_1,\ldots,\bm Y_L) \sim \Qb$ and $(\bm Z_1,\ldots,\bm Z_L)$. The first step is to provide the following upper bound of the low-degree advantage $\mathsf{Adv}_{\leq D}(\Pb_{\bullet}\|\Qb_{\bullet})$, as incorporated in the following lemma. 
\begin{lemma}{\label{lem-bound-low-deg-Adv}}
    Suppose that \eqref{eq-assumption-lambda} holds. We have $\mathsf{Adv}_{\leq D}(\Pb_{\bullet}\|\Qb_{\bullet})^2 =\mathsf{Adv}_{\leq D}(\Pb\|\Qb)^2 \leq \exp(O(L))$ for any $D,L=n^{o(1)}$.
\end{lemma}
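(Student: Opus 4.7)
\textbf{Reducing to $\mathsf{Adv}_{\leq D}(\Pb\|\Qb)$.} The equality $\mathsf{Adv}_{\leq D}(\Pb_\bullet\|\Qb_\bullet)=\mathsf{Adv}_{\leq D}(\Pb\|\Qb)$ comes from the fact that $(\bm Z_1,\ldots,\bm Z_L)$ has the same law and is independent of $(\bm Y_1,\ldots,\bm Y_L)$ under both $\Pb_\bullet$ and $\Qb_\bullet$. For any $f(\bm Y,\bm Z)\in\mathcal P_D$ the projection $g(\bm Y):=\mathbb E_{\bm Z}[f(\bm Y,\bm Z)]\in\mathcal P_D$ preserves $\mathbb E_{\Pb_\bullet}[f]=\mathbb E_\Pb[g]$ and, by conditional Jensen, can only reduce the denominator $\mathbb E_{\Qb_\bullet}[f^2]\geq\mathbb E_\Qb[g^2]$, which gives one inequality; the reverse is immediate.

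\textbf{Explicit formula and truncation.} Using the complex-Hermite expansion for spiked $\mathsf{GUE}(n)$ applied independently to each of the $L$ channels, and noting that rotational invariance of the uniform prior on $\mathbb S$ makes the differences $\theta_k:=\varphi_k-\varphi_k'$ i.i.d.\ uniform on $[0,2\pi)$, I would derive
\[
\mathsf{Adv}_{\leq D}(\Pb\|\Qb)^2 = \mathbb E\left[P_D(S)\right], \qquad S=\frac{\lambda^2}{n}\sum_{\ell=1}^{L}|s_\ell|^2, \quad s_\ell=\sum_{k=1}^{n}e^{i\ell\theta_k},
\]
with $P_D(t):=\sum_{d=0}^{D}t^d/d!$. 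Since $\mathbb E[S]\leq\lambda^2 L$, the plan is to split at a threshold $K=\Theta(L)$: on $\{S\leq K\}$ we have $P_D(S)\leq e^S\leq e^K=\exp(O(L))$, which already matches the target, so the only remaining task is to control the tail $\mathbb E[P_D(S)\mathbf 1_{S>K}]$ by the same order.

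\textbf{Tail bound via Lindeberg interpolation.} For the tail I would use $P_D(t)\leq(D+1)(et/D)^D$ for $t>D$, which reduces the problem to high-order mixed moments of $S$. The single-frequency moments $\mathbb E[|s_\ell|^{2d}]$ are of order $d!\,n^d$ for $d\ll n$ and can be computed combinatorially from the orthogonality $\mathbb E[e^{ip\theta_k}]=\mathbf 1_{p=0}$, but the joint moments $\mathbb E[\prod_\ell |s_\ell|^{2d_\ell}]$ couple all $L$ frequencies through the shared $\theta_k$, which is the obstacle that confined \cite{KBK24+} to $L=O(1)$. Following the hint in Section~\ref{subsec:proof-strategy}, I would apply a Lindeberg-type interpolation that replaces the vectors $\bigl(e^{i\ell\theta_k}\bigr)_{\ell=1}^{L}$ by matched-moment Gaussian surrogates one $k$ at a time. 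In the surrogate limit the $|s_\ell|^2/n$ behave like independent $\mathrm{Exp}(1)$ variables, and the target bound reduces to the Laplace-transform identity
\[
\mathbb E\left[\exp\left(\lambda^2\sum_{\ell=1}^L W_\ell\right)\right] = (1-\lambda^2)^{-L}=\exp(O(L)),
\]
which is finite precisely because $\lambda<1-\Omega(1)$ by \eqref{eq-assumption-lambda}.

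\textbf{Main obstacle.} The delicate step is bounding the cumulative Lindeberg swap error uniformly over all multi-indices $(d_1,\ldots,d_L)$ with $\sum_\ell d_\ell\leq D$: the naive single-moment bound $d!\,n^d$ is enormous, and is tamed only by the prefactor $(\lambda^2/n)^d/d!$ in $P_D$, which cancels the $n^d$ and leaves the benign factor $\lambda^{2d}$. One therefore has to accumulate per-swap errors over all $n$ base variables and all admissible degree vectors while keeping the total overhead at $\exp(O(L))$. This is feasible when $D,L=n^{o(1)}$ because the combinatorial overhead introduced by each swap is $n^{o(1)}$, but calibrating this balance across all $L$ channels simultaneously---rather than one channel at a time, which already sufficed for $L=O(1)$ in \cite{KBK24+}---is the core technical hurdle, and is what the promised ``delicate'' interpolation argument must deliver.
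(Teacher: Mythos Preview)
Your proposal correctly identifies the decisive ingredient---a Lindeberg interpolation that swaps the vectors $(e^{i\ell\theta_k})_{\ell=1}^{L}$ for matched-moment Gaussian surrogates one index $k$ at a time---and lands on the same endpoint $(1-\lambda^2)^{-L}=\exp(O(L))$. The reduction $\mathsf{Adv}_{\leq D}(\Pb_\bullet\|\Qb_\bullet)=\mathsf{Adv}_{\leq D}(\Pb\|\Qb)$ via conditional Jensen is a clean alternative to the paper's orthogonal-basis argument.

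The organization, however, diverges from the paper in one substantive way. Your threshold split at $K=\Theta(L)$ is a detour: the ``easy'' region $\{S\le K\}$ gives $\exp(O(L))$ for free, but the tail $\{S>K\}$ still carries the entire difficulty (the crude bound $P_D(S)\le(D+1)(eS/D)^D$ combined with $|s_\ell|^2\le n^2$ yields $n^{D(1+o(1))}$, which is far too large to be absorbed by any sub-Gaussian tail of $S$ when $D=n^{o(1)}$), so you are back to needing the full Lindeberg on moments anyway. The paper avoids this split altogether: it first uses the elementary inequality $\exp_{\le D}(\sum_i x_i)\le\prod_i\exp_{\le D}(x_i)$ to pass to a $2L$-fold product $\prod_\ell\exp_{\le D}(\lambda^2U_\ell^2)\exp_{\le D}(\lambda^2V_\ell^2)$, and then interpolates this entire product, written as a single sum $F_t$ over degree vectors $(k_1,\dots,k_L,m_1,\dots,m_L)$.

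The ``main obstacle'' you flag---controlling the cumulative swap error uniformly over all degree vectors---is resolved in the paper not by bounding individual mixed moments, but by a self-bounding inequality: writing the contribution from the Taylor coefficients of order $(\alpha_1,\dots,\alpha_L;\beta_1,\dots,\beta_L)$ in the swapped variable as $\Lambda(\alpha;\beta)$, one shows $\Lambda(\alpha;\beta)\le(16D^2)^{|\alpha|+|\beta|}\Lambda(0;0)$. Since only terms with $|\alpha|+|\beta|\ge3$ differ between the two sides of the swap, and each such term carries a prefactor $n^{-(|\alpha|+|\beta|)/2}$, the ratio $F_{t+1}/F_t$ is $1+O(n^{-3/2+o(1)})$, which telescopes harmlessly over $n$ swaps. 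This ratio control is the mechanism that replaces your proposed per-moment bookkeeping and makes the argument go through for all $D,L=n^{o(1)}$ simultaneously.
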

\begin{remark}
    We note that our bound on $\mathsf{Adv}_{\leq D}(\Pb\|\Qb)$ coincides with the one in \cite{KBK24+} when $L=O(1)$. The analysis in \cite{KBK24+}, which relies on delicate combinatorial arguments, is inherently limited to constant $L$. In contrast, we employ a more robust interpolation method that allows us to control $\mathsf{Adv}_{\leq D}(\Pb\|\Qb)$ for all $L=n^{o(1)}$, thereby extending the earlier result to a much broader scaling regime.
\end{remark}
The next lemma, however, shows that if weak recovery is possible, then we can find an $(\mathrm T;c;\epsilon)$-test between $\Pb_{\bullet}$ and $\Qb_{\bullet}$ with moderate $\mathrm{T}$ and exponentially small $\epsilon$.
\begin{lemma}{\label{lem-reduce-to-one-sided-test}}
    Suppose that for some $\lambda<1-\delta$ where $\delta>0$ is a constant, given  
    \begin{align*}
        (\bm Y_1,\ldots,\bm Y_L) \sim \Pb_{n,L,\lambda}
    \end{align*}
    there exists an estimator $\mathcal X=\mathcal X(\bm Y_1,\ldots,\bm Y_L)$ that can be calculated in time $n^{\mathrm T}$ and achieves weak recovery in the sense of Definition~\ref{def-weak-recovery}. Then there exists $\delta'>0$ and for $\lambda=1-\delta'$, there exists a $(O(\mathrm T);c;\epsilon)$-test between $\Pb_{\bullet}$ and $\Qb_{\bullet}$ with $c=\Omega(1)$ and $\epsilon=e^{-\Theta(n)}$.
\end{lemma}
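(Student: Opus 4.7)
The strategy is to use the auxiliary matrices $\bm Z_1,\ldots,\bm Z_L$ to perform a \emph{signal-splitting} orthogonal rotation that simultaneously (i) weakens the planted spike just enough so that the given estimator $\mathcal X$ still achieves weak recovery on the rotated input, and (ii) stores the residual portion of the signal in an independent auxiliary channel that serves as the validator for our test.

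Let $\lambda_\star<1-\delta$ be the SNR at which $\mathcal X$ is guaranteed to achieve weak recovery. Set $\delta':=\delta/2$ and $\lambda:=1-\delta'$, and pick $\theta\in(0,\pi/2)$ with $\cos\theta = \lambda_\star/\lambda$; since $\lambda_\star<\lambda$, this forces $\sin\theta = \Omega(1)$. For each $k=1,\ldots,L$, define
\begin{align*}
    \tilde{\bm Y}_k := \cos\theta \cdot \bm Y_k + \sin\theta \cdot \bm Z_k, \qquad
    \tilde{\bm Z}_k := -\sin\theta \cdot \bm Y_k + \cos\theta \cdot \bm Z_k.
\end{align*}
Because an orthogonal linear combination of two independent $\mathsf{GUE}(n)$ matrices yields two independent $\mathsf{GUE}(n)$ matrices, one checks that under $\Pb_\bullet$ the tuple $(\tilde{\bm Y}_1,\ldots,\tilde{\bm Y}_L)$ has exactly the law $\Pb_{n,L,\lambda_\star}$, while $\tilde{\bm Z}_1 = -\sin\theta\cdot(\lambda/\sqrt n)\bm x\bm x^* + \bm U'_1$ with $\bm U'_1 \sim \mathsf{GUE}(n)$ independent of $(\tilde{\bm Y}_1,\ldots,\tilde{\bm Y}_L)$ conditional on $\bm x$; under $\Qb_\bullet$ all rotated matrices remain independent and $\mathsf{GUE}(n)$-distributed.

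Set $\widehat{\mathcal X} := \mathcal X(\tilde{\bm Y}_1,\ldots,\tilde{\bm Y}_L)$ and propose the test
\begin{align*}
    T := -\frac{\langle \widehat{\mathcal X}, \tilde{\bm Z}_1 \rangle}{\|\widehat{\mathcal X}\|_{\Fop}}, \qquad \mathcal A := \mathbbm{1}\{T \geq t_0\},
\end{align*}
with threshold $t_0$ a small constant multiple of $\sin\theta\cdot\lambda\cdot\sqrt n$. Under $\Qb_\bullet$, $\widehat{\mathcal X}$ is independent of $\tilde{\bm Z}_1\sim\mathsf{GUE}(n)$, and a short computation using the Hermitian–GUE pairing shows $T\mid\widehat{\mathcal X}$ is standard real Gaussian, so $\Qb_\bullet(T\geq t_0)\leq e^{-\Omega(n)}$. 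Under $\Pb_\bullet$, conditional on $(\widehat{\mathcal X},\bm x)$ one has
\begin{align*}
    T = \sin\theta \cdot \frac{\lambda}{\sqrt n} \cdot \frac{\langle \widehat{\mathcal X}, \bm x\bm x^* \rangle}{\|\widehat{\mathcal X}\|_{\Fop}} + G, \qquad G\sim\mathcal N(0,1).
\end{align*}
The weak-recovery hypothesis applied to $\Pb_{n,L,\lambda_\star}$ yields $\mathbb E\big[\langle\widehat{\mathcal X},\bm x\bm x^*\rangle/(\|\widehat{\mathcal X}\|_{\Fop}\cdot n)\big]\geq c$; since this normalized ratio lies in $[-1,1]$, an elementary one-sided Markov step implies that with probability at least $c/2$ it exceeds $c/2$, on which event the signal part of $T$ is $\Omega(\sqrt n)$ and dominates $G$, giving $\Pb_\bullet(T\geq t_0)=\Omega(1)$.

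The total runtime is one call to $\mathcal X$ plus an $O(Ln^2)$ mixing step and an $O(n^2)$ inner product, which is $n^{O(\mathrm T)}$ as required. I do not foresee a major obstacle; the only mildly delicate computations are (a) the variance identity $\mathrm{Var}(\langle M,W\rangle) = \|M\|_{\Fop}^2$ for Hermitian $M$ and $W\sim\mathsf{GUE}(n)$, which follows directly from the covariance structure spelled out in Definition~\ref{def-group-synchronization}, and (b) the one-sided Markov step converting the expected correlation into a positive-probability event. Both are routine.
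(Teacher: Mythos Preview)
Your proposal is correct and follows essentially the same approach as the paper: both use the external Gaussian matrices $\bm Z_1,\ldots,\bm Z_L$ to perform an orthogonal ``signal-splitting'' rotation, apply the weak-recovery estimator on the channel with reduced SNR, and then validate via the inner product with the residual channel (the paper parametrizes the rotation by $\kappa$ rather than your $\theta$, but the constructions are equivalent). The Markov step, the variance identity for $\langle M,W\rangle$ with Hermitian $M$ and $W\sim\mathsf{GUE}(n)$, and the Gaussian tail bound are handled identically in both arguments.
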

Based on Lemmas~\ref{lem-bound-low-deg-Adv} and \ref{lem-reduce-to-one-sided-test}, we can now finish the proof of Theorem~\ref{MAIN-THM-informal}.

\begin{proof}[Proof of Theorem~\ref{MAIN-THM-informal}]
    Suppose on the contrary that for some $\lambda<1-\delta$ where $\delta>0$ is a constant, given  
    \begin{align*}
        (\bm Y_1,\ldots,\bm Y_L) \sim \Pb_{n,L,\lambda}
    \end{align*}
    there exists an estimator $\mathcal X=\mathcal X(\bm Y_1,\ldots,\bm Y_L)$ that can be calculated in time $\exp(\mathsf T')$ for some $\mathsf T'=n^{o(1)}$ and achieves weak recovery in the sense of Definition~\ref{def-weak-recovery}. Using Lemma~\ref{lem-reduce-to-one-sided-test}, we see that for some $\lambda=1-\delta'$, there exists an $(O(\mathsf T');\Omega(1);e^{-\Theta(n)})$-test between $\Pb_{\bullet}$ and $\Qb_{\bullet}$. However, using Lemma~\ref{lem-bound-low-deg-Adv} and Theorem~\ref{thm-alg-contiguity}, we see that there is no $(\mathsf T;c;\epsilon)$-test between $\Pb_{\bullet}$ and $\Qb_{\bullet}$, provided that
    \begin{equation}
        \mathsf T \leq \frac{D-(O(L^2)+4\log n)^C}{(O(L^2)+4\log n)^C}, \quad c=\Omega(1), \quad \epsilon=e^{-\Omega(L)} \,.
    \end{equation}
    By choosing $L=n^{o(1)}$ and $D=L^{4C}\cdot \mathsf T'=n^{o(1)}$, this forms a contraction. Thus, we conclude that any estimator $\mathcal X=\mathcal X(\bm Y_1,\ldots,\bm Y_L)$ that can be calculated in time $\exp(n^{o(1)})$ fails to achieve weak recovery in the sense of Definition~\ref{def-weak-recovery}.
\end{proof}
In the rest part of this section, we give the postponed proofs of Lemmas~\ref{lem-bound-low-deg-Adv} and \ref{lem-reduce-to-one-sided-test}.

\subsection{Proof of Lemma~\ref{lem-bound-low-deg-Adv}}{\label{subsec:proof-lem-3.1}}

We first show that $\mathsf{Adv}_{\leq D}(\Pb_{\bullet} \|\Qb_{\bullet})=\mathsf{Adv}_{\leq D}(\Pb\|\Qb)$. Denote $\mu$ be the law of $(\bm Z_1,\ldots,\bm Z_L)$, we then have $\Pb_{\bullet}=\Pb \otimes \mu$ and $\Qb_{\bullet} = \Qb \otimes \mu$ are both product measures. Denote $\{ f_{\alpha}(\bm Y_1,\ldots,\bm Y_L): \alpha \in \mathfrak S_1 \}$ be the standard orthogonal basis of $\mathcal P_{\leq D}(\bm Y_1,\ldots,\bm Y_L)$ under $\Qb$ such that $f_{\emptyset}=1$, and denote $\{ g_{\beta}(\bm Z_1,\ldots,\bm Z_L): \beta \in \mathfrak S_2 \}$ be the standard orthogonal basis of $\mathcal P_{\leq D}(\bm Z_1,\ldots,\bm Z_L)$ under $\mu$ such that $g_{\emptyset}=1$. There is then a natural standard orthogonal basis under $\Qb_{\bullet}$, given by
\begin{align*}
    \left\{ f_{\alpha} g_{\beta}: \operatorname{deg}(f_{\alpha})+\operatorname{deg}(g_{\beta}) \leq D \right\} \,.
\end{align*}
Similarly as in \eqref{eq-low-deg-Adv-trandsform}, we see that
\begin{align}
    \mathsf{Adv}_{\leq D}\big( \Pb_{\bullet} \| \Qb_{\bullet} \big)^2 = \sum_{ \substack{ \alpha\in\mathfrak S_1,\beta\in\mathfrak S_2 \\ \operatorname{deg}(f_{\alpha})+\operatorname{deg}(g_{\beta}) \leq D } } \mathbb E_{\Pb_{\bullet}}\big[ f_{\alpha} g_{\beta} \big]^2 \,. \label{eq-low-deg-Adv-bullet-relax-1}
\end{align}
In addition, from direct calculation that
\begin{align}
    \mathbb E_{\Pb_{\bullet}}\big[ f_{\alpha} g_{\beta} \big]^2 = \begin{cases}
        \mathbb E_{\Pb}\big[ f_{\alpha} \big] \,, & (\alpha,\beta) = (\alpha,\emptyset) \,; \\
        0 \,, & \mbox{otherwise} \,.
    \end{cases} \label{eq-cal-moment-Pb-bullet}
\end{align}
Plugging \eqref{eq-cal-moment-Pb-bullet} into \eqref{eq-low-deg-Adv-bullet-relax-1}, we get that 
\begin{align*}
    \eqref{eq-low-deg-Adv-overline-relax-1} &= \sum_{ \alpha \in \mathfrak S_1 } \left( \mathbb E_{\Pb}\big[ f_{\alpha} \big] \right)^2 =\mathsf{Adv}_{\leq}(\Pb\|\Qb)^2 \,,
\end{align*}
Thus, we have $\mathsf{Adv}_{\leq D}(\Pb_{\bullet} \|\Qb_{\bullet}) =\mathsf{Adv}_{\leq D}(\Pb\|\Qb)$. It remains to show that $\mathsf{Adv}_{\leq D}(\Pb\|\Qb)^2\leq \exp(O(L))$ for any $D,L=n^{o(1)}$. To this end, we first use standard tools for general Gaussian additive models \cite{KWB22} to derive the following bound on low-degree advantage.
\begin{lemma}{\label{lem-low-deg-Adv-relax-1}}
    Define 
    \begin{equation}{\label{eq-exp-leq-D}}
        \exp_{\leq D}(x) = \sum_{k=0}^{D} \frac{ x^k }{ k! } \,.
    \end{equation}
    We then have 
    \begin{align}{\label{eq-bound-Adv-relax-1}}
        \mathsf{Adv}_{\leq D}(\Pb\|\Qb)^2 \leq \mathbb E_{\theta_1, \ldots,\theta_n \sim \operatorname{Unif}[0,2\pi]}\left\{ \prod_{1 \leq \ell \leq L} \exp_{\leq D}\left( \lambda^2 U_{\ell}^2 \right) \prod_{1 \leq \ell \leq L} \exp_{\leq D}\left( \lambda^2 V_{\ell}^2 \right) \right\} \,.
    \end{align}
    Here 
    \begin{align}{\label{eq-def-U-ell-V-ell}}
        U_{\ell} = \frac{1}{\sqrt{n}} \sum_{t=1}^{n} \sin(\ell\theta_t), \quad V_{\ell} = \frac{1}{\sqrt{n}} \sum_{t=1}^{n} \cos(\ell\theta_t) \,.
    \end{align}
\end{lemma}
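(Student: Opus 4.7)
The plan is to combine the standard low-degree identity for Gaussian additive models with a simple product bound on the truncated exponential $\exp_{\leq D}$. Since each channel is the Gaussian additive model $\bm Y_\ell = \bm S_\ell(\bm x) + \bm W_\ell$ with $\bm S_\ell(\bm x) = \frac{\lambda}{\sqrt n}\bm x^{(\ell)}(\bm x^{(\ell)})^*$, independent $\mathsf{GUE}(n)$ noises across channels, and prior $\bm x = (e^{i\theta_1},\ldots,e^{i\theta_n})$ with $\theta_t$ i.i.d.\ uniform on $[0,2\pi]$, the Hermite-expansion computation of \cite{KWB22} gives the exact identity
\begin{equation*}
    \mathsf{Adv}_{\leq D}(\Pb\|\Qb)^2 \;=\; \mathbb E_{\bm x,\bm x'}\!\left[\exp_{\leq D}\!\left(\sum_{\ell=1}^{L} \big\langle \bm S_\ell(\bm x),\bm S_\ell(\bm x')\big\rangle\right)\right],
\end{equation*}
where $\bm x,\bm x'$ are independent copies of the prior. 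Here $\langle\cdot,\cdot\rangle$ is the Frobenius pairing on Hermitian matrices, which is the natural one because a direct computation shows $\mathbb E[\mathrm{tr}(\bm W A)\mathrm{tr}(\bm W B)]=\mathrm{tr}(AB)$ for $\bm W\sim \mathsf{GUE}(n)$ and Hermitian $A,B$.

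Next I would simplify the exponent. By cyclicity of trace,
\begin{equation*}
    \big\langle \bm S_\ell(\bm x),\bm S_\ell(\bm x')\big\rangle = \frac{\lambda^2}{n}\big|\langle \bm x^{(\ell)},\bm x'^{(\ell)}\rangle\big|^2 = \frac{\lambda^2}{n}\bigg|\sum_{t=1}^n e^{i\ell(\theta_t-\theta_t')}\bigg|^2.
\end{equation*}
Because $\theta_t,\theta_t'$ are independent uniforms on $[0,2\pi]$, the differences $\phi_t:=\theta_t-\theta_t'\bmod 2\pi$ are again i.i.d.\ uniform, so after relabeling $\phi_t\mapsto\theta_t$ we may write the inner expectation as a single integral over $\theta_1,\ldots,\theta_n\sim\mathrm{Unif}[0,2\pi]$. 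Splitting $|\sum_t e^{i\ell\theta_t}|^2$ into real and imaginary parts and using \eqref{eq-def-U-ell-V-ell} identifies the argument of $\exp_{\leq D}$ with $\lambda^2\sum_{\ell=1}^{L}(U_\ell^2+V_\ell^2)$, yielding $\mathsf{Adv}_{\leq D}(\Pb\|\Qb)^2 = \mathbb E\big[\exp_{\leq D}\big(\lambda^2\sum_\ell (U_\ell^2+V_\ell^2)\big)\big]$.

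The final step is to convert the sum inside $\exp_{\leq D}$ into a product of $\exp_{\leq D}$ factors. For any nonnegative $a_1,\ldots,a_m\geq 0$,
\begin{equation*}
    \exp_{\leq D}\!\left(\sum_{i=1}^{m} a_i\right) \;\leq\; \prod_{i=1}^{m} \exp_{\leq D}(a_i),
\end{equation*}
because by multinomial expansion the left-hand side sums the monomials $\prod_i a_i^{k_i}/k_i!$ over the index set $\{(k_1,\ldots,k_m): \sum_i k_i\leq D\}$, whereas the right-hand side sums the same nonnegative monomials over the strictly larger index set $\{(k_1,\ldots,k_m): k_i\leq D\ \forall i\}$. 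Applying this bound to the $2L$ nonnegative quantities $\lambda^2 U_\ell^2$ and $\lambda^2 V_\ell^2$ inside the expectation gives exactly \eqref{eq-bound-Adv-relax-1}. No step presents a substantive obstacle; the only part requiring care is checking the normalization conventions so that the GUE inner product appearing in the Hermite identity matches the Frobenius form on Hermitian matrices used in the calculation.
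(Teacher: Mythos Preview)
Your proposal is correct and follows essentially the same route as the paper: obtain the identity $\mathsf{Adv}_{\leq D}(\Pb\|\Qb)^2=\mathbb E\big[\exp_{\leq D}\big(\lambda^2\sum_\ell(U_\ell^2+V_\ell^2)\big)\big]$ and then apply the sub-multiplicativity $\exp_{\leq D}(\sum_i a_i)\le\prod_i\exp_{\leq D}(a_i)$ for $a_i\ge 0$. The only cosmetic difference is that the paper quotes the single-replica formula $\mathbb E_{\bm x}\exp_{\leq D}\big(\tfrac{\lambda^2}{n}\sum_\ell|\langle \bm x^{(\ell)},\mathbbm 1_n\rangle|^2\big)$ directly from \cite[Lemma~4.3]{KBK24+}, whereas you start from the two-replica KWB22 identity and reduce to a single replica via $\phi_t=\theta_t-\theta_t'$; these are the same computation.
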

\begin{proof}
    Using \cite[Lemma~4.3]{KBK24+}, we have that (below we use $\mathbbm 1_n$ to denote the $n$-dimensional all-one vector)
    \begin{equation*}
        \mathsf{Adv}_{\leq D}(\Pb\|\Qb)^2 = \mathbb E_{\bm x}\left\{ \exp_{\leq D}\left( \frac{\lambda^2}{n} \sum_{1\leq\ell\leq L} | \langle \bm x^{(\ell)}, \mathbbm 1_n \rangle |^2 \right) \right\} \,.
    \end{equation*}
    Note that we have $\bm x=(e^{i\theta_1},\ldots,e^{i\theta_n})$ where $\theta_1,\ldots,\theta_n \sim \operatorname{Unif}[0,2\pi]$, so
    \begin{align*}
        \frac{1}{n} | \langle \bm x^{(\ell)}, \mathbbm 1_n \rangle |^2 = U_{\ell}^2 + V_{\ell}^2 \,,
    \end{align*}
    we then have
    \begin{align*}
        \mathsf{Adv}_{\leq D}(\Pb\|\Qb)^2 &= \mathbb E_{\theta_1,\ldots,\theta_n \sim \operatorname{Unif}[0,2\pi]}\left\{ \exp_{\leq D}\left( \lambda^2 \sum_{1\leq\ell\leq L} (U_{\ell}^2+V_{\ell}^2) \right) \right\} \\
        &\leq \mathbb E_{\theta_1,\ldots,\theta_n \sim \operatorname{Unif}[0,2\pi]}\left\{ \prod_{1 \leq \ell \leq L} \exp_{\leq D}\left( \lambda^2 U_{\ell}^2 \right) \prod_{1 \leq \ell \leq L} \exp_{\leq D}\left( \lambda^2 V_{\ell}^2 \right) \right\} \,,
    \end{align*}
    where the inequality follows from $\exp_{\leq D}(x+y) \leq \exp_{\leq D}(x) \exp_{\leq D}(y)$ for $x,y \geq 0$.
\end{proof}

Provided with Lemma~\ref{lem-bound-low-deg-Adv}, it suffices to bound the right hand side of \eqref{eq-bound-Adv-relax-1} when $\lambda<1-\Omega(1)$. Note that
\begin{align}
    \eqref{eq-bound-Adv-relax-1} &= \mathbb E\left\{ \prod_{1 \leq \ell \leq L} \left( \sum_{ 0 \leq k_{\ell} \leq D } \frac{ \lambda^{2k_{\ell}} U_\ell^{2k_{\ell}} }{ k_{\ell}! } \right) \prod_{1 \leq \ell \leq L} \left( \sum_{ 0 \leq m_{\ell} \leq D } \frac{ \lambda^{2m_{\ell}} V_\ell^{2m_{\ell}} }{ m_{\ell}! } \right) \right\} \nonumber \\
    &= \sum_{ \substack{ 0 \leq k_1,\ldots,k_L \leq D \\ 0 \leq m_1,\ldots,m_L \leq D } } \frac{ \lambda^{ 2(k_1+\ldots+k_L+m_1+\ldots+m_L) } }{ k_1!\ldots k_L! m_1!\ldots m_L! } \mathbb E\left\{ U_1^{2k_1} \ldots U_L^{2k_L} V_1^{2m_1} \ldots V_L^{2m_L} \right\} \,.  \label{eq-bound-Adv-relax-2}
\end{align}
The basic intuition behind our approach of bounding \eqref{eq-bound-Adv-relax-2} is that according to \eqref{eq-def-U-ell-V-ell} and central limit theorem, we should expect that 
\begin{align}{\label{eq-Gaussian-approx-intuition}}
    \left( U_1,\ldots,U_L,V_1,\ldots,V_L \right) \mbox{ behaves like } \left( \zeta_1,\ldots,\zeta_L,\eta_1,\ldots,\eta_L \right) \sim \mathcal N(0,\frac{1}{2}\mathbb I_{2L}) \,.
\end{align}
The key of our proof is to show that such Gaussian approximation is indeed valid for all low-order moments via a delicate Lindeberg’s interpolation argument. To this end, we sample 
\begin{align*}
    \left\{ \zeta_\ell(t), \eta_\ell(t): 1 \leq t \leq n, 1 \leq \ell \leq L \right\}
\end{align*}
i.i.d.\ from $\mathcal N(0,\frac{1}{2})$. 
Define
\begin{equation}{\label{eq-def-U-ell(t)-V-ell(t)}}
    \begin{aligned}
        &U_\ell(t)= \frac{1}{\sqrt{n}}\left( \sum_{ 1 \leq j \leq t } \zeta_\ell(j) + \sum_{t+1 \leq j \leq n} \sin(\ell\theta_j) \right) \,; \\ 
        &V_\ell(t)= \frac{1}{\sqrt{n}}\left( \sum_{ 1 \leq j \leq t } \eta_\ell(j) + \sum_{t+1 \leq j \leq n} \cos(\ell\theta_j) \right) \,.
    \end{aligned}
\end{equation}
And define
\begin{align}{\label{eq-def-F-t}}
    F_t = \sum_{ \substack{ 0 \leq k_1,\ldots,k_L \leq D \\ 0 \leq m_1,\ldots,m_L \leq D } } \frac{ \lambda^{ 2(k_1+\ldots+k_L+m_1+\ldots+m_L) } }{ k_1!\ldots k_L! m_1!\ldots m_L! } \mathbb E\left\{ U_1(t)^{2k_1} \ldots U_L(t)^{2k_L} V_1(t)^{2m_1} \ldots V_L(t)^{2m_L} \right\} \,.
\end{align}
In particular, we have (note that $\{ U_{\ell}(n),V_{\ell}(n):1 \leq \ell \leq L \} \overset{d}{=} \{ \zeta_\ell,\eta_\ell:1 \leq \ell \leq L \}$)
\begin{align*}
    F_0 &= \sum_{ \substack{ 0 \leq k_1,\ldots,k_L \leq D \\ 0 \leq m_1,\ldots,m_L \leq D } } \frac{ \lambda^{ 2(k_1+\ldots+k_L+m_1+\ldots+m_L) } }{ k_1!\ldots k_L! m_1!\ldots m_L! } \mathbb E\left\{ U_1^{2k_1} \ldots U_L^{2k_L} V_1^{2m_1} \ldots V_L^{2m_L} \right\} = \eqref{eq-bound-Adv-relax-2} \,; \\
    F_n &= \sum_{ \substack{ 0 \leq k_1,\ldots,k_L \leq D \\ 0 \leq m_1,\ldots,m_L \leq D } } \frac{ \lambda^{ 2(k_1+\ldots+k_L+m_1+\ldots+m_L) } }{ k_1!\ldots k_L! m_1!\ldots m_L! } \mathbb E\left\{ \zeta_1^{2k_1} \ldots \zeta_L^{2k_L} \eta_1^{2m_1} \ldots \eta_L^{2m_L} \right\} \,.
\end{align*}
We now show that $F_t$ and $F_{t+1}$ are ``close'' in a certain sense, as incorporated in the next lemma.
\begin{lemma}{\label{lem-interpolate-F-t-F-t+1}}
    Suppose that $\lambda\leq 1$ and $D,L=n^{o(1)}$, we then have $F_{t+1}=[1+O(n^{-\frac{3}{2}+o(1)})]F_t$ for all $0 \leq t \leq n-1$.
\end{lemma}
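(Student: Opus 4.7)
The plan is to apply Lindeberg's interpolation across the single coordinate that changes between times $t$ and $t+1$. Only the $(t{+}1)$-th summand of each $U_\ell, V_\ell$ changes: $(\sin(\ell\theta_{t+1}), \cos(\ell\theta_{t+1}))/\sqrt{n}$ is replaced by $(\zeta_\ell(t+1), \eta_\ell(t+1))/\sqrt{n}$. Setting $\widetilde U_\ell := U_\ell(t) - n^{-1/2}\sin(\ell\theta_{t+1})$, $\widetilde V_\ell := V_\ell(t) - n^{-1/2}\cos(\ell\theta_{t+1})$, and abbreviating $a_\ell := \sin(\ell\theta_{t+1})$, $c_\ell := \cos(\ell\theta_{t+1})$, $b_\ell := \zeta_\ell(t+1)$, $d_\ell := \eta_\ell(t+1)$, the ``rest'' $(\widetilde U, \widetilde V)$ is independent of the $(t{+}1)$-slot variables. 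With $h(x) := \exp_{\leq D}(\lambda^2 x^2)$ and $G(u, v) := \prod_{\ell=1}^L h(u_\ell) h(v_\ell)$, one has $F_t = \mathbb{E}[G(\widetilde U + a/\sqrt{n}, \widetilde V + c/\sqrt{n})]$ and $F_{t+1} = \mathbb{E}[G(\widetilde U + b/\sqrt{n}, \widetilde V + d/\sqrt{n})]$. Since $G$ is a polynomial of total degree at most $4LD$, the multivariate Taylor expansion of $G$ around $(\widetilde U, \widetilde V)$ in these perturbations terminates after finitely many terms.

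The first key input is moment matching through order two. For $\theta \sim \mathrm{Unif}[0, 2\pi]$ and $\ell_1, \ell_2 \geq 1$, elementary trigonometric identities give $\mathbb{E}[\sin(\ell_1 \theta) \sin(\ell_2 \theta)] = \tfrac{1}{2}\delta_{\ell_1, \ell_2} = \mathbb{E}[\zeta_{\ell_1} \zeta_{\ell_2}]$, analogous equalities for cosines and mixed sine-cosine products, and vanishing first moments. All Taylor terms of order $|\alpha| + |\beta| \leq 2$ therefore cancel in the difference, leaving
\begin{equation*}
F_{t+1} - F_t = \sum_{|\alpha|+|\beta| \geq 3} \frac{\mathbb{E}[b^\alpha d^\beta] - \mathbb{E}[a^\alpha c^\beta]}{\alpha!\,\beta!\,n^{(|\alpha|+|\beta|)/2}}\, \mathbb{E}\Big[\prod_\ell h^{(\alpha_\ell)}(\widetilde U_\ell) h^{(\beta_\ell)}(\widetilde V_\ell)\Big],
\end{equation*}
after using the factorization $\partial^{\alpha, \beta} G = \prod_\ell h^{(\alpha_\ell)}(u_\ell) h^{(\beta_\ell)}(v_\ell)$. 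Every surviving term carries a factor $n^{-k/2}$ with $k := |\alpha|+|\beta| \geq 3$.

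The second input is a coefficient-wise bound on the derivatives of $h$. Expanding $h^{(m)}(u) = \sum_{k \geq \lceil m/2\rceil} \lambda^{2k}(2k)!/((2k-m)!\,k!)\,u^{2k-m}$ and comparing termwise to $h(u) = \sum_k \lambda^{2k}u^{2k}/k!$, one sees that the $u^{2k}$-coefficient of $h$ gets multiplied by $(2k)(2k-1)\cdots(2k-m+1) \leq (2D)^m$ and the degree shifts down by $m$; this yields the pointwise bound $|h^{(m)}(u)| \leq (2D)^m (1+|u|) h(u)$ for all $u \in \mathbb{R}$. Combined with $|\mathbb{E}[a^\alpha c^\beta]| \leq 1$, standard Gaussian moment bounds on $|\mathbb{E}[b^\alpha d^\beta]|$, and the count of at most $(2L)^k$ multi-indices of order $k$, the contribution of Taylor order $k$ is bounded by $(CLD/\sqrt{n})^k$ times $\mathbb{E}\bigl[\prod_\ell (1+|\widetilde U_\ell|)(1+|\widetilde V_\ell|)\, G(\widetilde U, \widetilde V)\bigr]$. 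Controlling this weight by $e^{O(L)} F_t = n^{o(1)} F_t$ via a Hölder argument together with subgaussian concentration of $\widetilde U_\ell, \widetilde V_\ell$, and summing the resulting geometric series in $k$ (which is dominated by the $k = 3$ term under $L, D = n^{o(1)}$), gives $|F_{t+1} - F_t| \leq O(n^{-3/2 + o(1)}) F_t$; since $F_t \geq 1$ as $h \geq 1$, the multiplicative form in the lemma follows.

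The main technical difficulty is the last comparison: bounding $\mathbb{E}[\prod_\ell (1+|\widetilde U_\ell|)(1+|\widetilde V_\ell|)\, G(\widetilde U, \widetilde V)]$ relative to $F_t$ uniformly in the regime $\lambda \leq 1$. A naive Cauchy-Schwarz would factor the expectation into $\mathbb{E}[h(\widetilde U_\ell)^2]^{1/2}$-type pieces, but $\mathbb{E}[h(\widetilde U_\ell)^2]$ diverges near $\lambda = 1/\sqrt{2}$ and so cannot cover the full range. The cleaner route is to use Hölder with a near-unity exponent $1+\delta$ on the weight $\prod_\ell(1+|\widetilde U_\ell|)(1+|\widetilde V_\ell|)$ and its dual $(1+\delta)/\delta$ on $G$, choosing $\delta$ small enough that $\|G\|_{L^{(1+\delta)/\delta}}$ remains within a factor $1+o(1)$ of $F_t$ (since $\lambda < 1$), while the $L^{1+\delta}$-norm of the weight is $e^{O(L)}$ by elementary subgaussian moment estimates. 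Executing this step uniformly across the Taylor orders $k$ up to $4LD = n^{o(1)}$ is the delicate part; once it is in hand, the summation over $\alpha, \beta$ and the final $(CLD/\sqrt n)^k$ bound follow mechanically.
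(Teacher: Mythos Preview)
Your overall architecture is right and matches the paper: Lindeberg swap at coordinate $t{+}1$, finite Taylor/binomial expansion of the product $G$, and cancellation of orders $0,1,2$ because the first two mixed moments of $(\sin(\ell\theta),\cos(\ell\theta))_{1\le\ell\le L}$ agree with those of $2L$ independent $\mathcal N(0,\tfrac12)$ variables. The quantity you call $\mathbb E\big[\prod_\ell h^{(\alpha_\ell)}(\widetilde U_\ell)h^{(\beta_\ell)}(\widetilde V_\ell)\big]/\prod_\ell(\alpha_\ell!\beta_\ell!)$ is exactly the paper's $\Lambda(\alpha;\beta)$, and the goal in both approaches is to show $\Lambda(\alpha;\beta)\le (CD)^{|\alpha|+|\beta|}\Lambda(0;0)$.

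The gap is in your H\"older step. You place the \emph{large} exponent $(1+\delta)/\delta$ on $G$ and then assert that for $\delta$ small $\|G\|_{L^{(1+\delta)/\delta}}$ stays within $1+o(1)$ of $F_t$. This is backwards: as $\delta\downarrow 0$ the exponent on $G$ tends to infinity, and for a degree-$4LD$ polynomial in subgaussian (but \emph{correlated}) variables $\|G\|_{L^p}$ is typically exponentially larger in $p$ and $LD$ than $\|G\|_{L^1}$; there is no reason for it to be close to $F_t$. Swapping the exponents does not save the argument either: with $1+\delta$ on $G$, you would need $\mathbb E[G^{1+\delta}]^{1/(1+\delta)}\le n^{o(1)}\mathbb E[G]$, which already fails near $\lambda=1$ (note the lemma assumes $\lambda\le 1$, not $\lambda<1$), and the $(\widetilde U_\ell)_\ell$ share the same $\theta_j$'s so product-type $L^p$ bounds are not available. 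In short, any route through a moment inequality on $G$ collides with the dependence across frequencies and with the boundary case $\lambda=1$.

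The paper avoids this entirely by \emph{not} passing to a pointwise bound on $h^{(m)}$. Instead it re-expands $\Lambda(\alpha;\beta)$ as the double sum over $(k_\ell,m_\ell)$ and compares \emph{termwise} to $\Lambda(0;0)$: first use $\binom{2k}{\alpha}u^{2k-\alpha}\le\tfrac12\big((2k)^{2\lfloor\alpha/2\rfloor}u^{2(k-\lfloor\alpha/2\rfloor)}+(2k)^{2\lceil\alpha/2\rceil}u^{2(k-\lceil\alpha/2\rceil)}\big)$ to reduce to even $(\alpha,\beta)$, then shift indices $k_\ell\mapsto k_\ell-\alpha_\ell$ and bound $\lambda^{2\alpha_\ell}(k_\ell+\alpha_\ell)^{2\alpha_\ell}\cdot k_\ell!/(k_\ell+\alpha_\ell)!\le (2D)^{2\alpha_\ell}$. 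Every summand is an expectation of a product of \emph{even} powers of $\widetilde U_\ell,\widetilde V_\ell$, hence nonnegative, so the comparison goes through without any integrability or independence hypothesis and uses only $\lambda\le 1$. Once $\Lambda(\alpha;\beta)\le(16D^2)^{|\alpha|+|\beta|}\Lambda(0;0)$ is in hand, your summation over $k\ge 3$ with the $(2L)^k$ multi-index count and $n^{-k/2}$ factor completes the proof exactly as you outlined.
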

\begin{proof}
    For $0 \leq t \leq n-1$, define 
    \begin{equation}{\label{eq-def-U-ell,*(t)-V-ell,*(t)}}
        \begin{aligned}
            &U_{\ell,*}(t)= \frac{1}{\sqrt{n}}\left( \sum_{ 1 \leq j \leq t } \zeta_\ell(j) + \sum_{t+2 \leq j \leq n} \sin(\ell\theta_j) \right) \,; \\ 
            &V_{\ell,*}(t)= \frac{1}{\sqrt{n}}\left( \sum_{ 1 \leq j \leq t } \eta_\ell(j) + \sum_{t+2 \leq j \leq n} \cos(\ell\theta_j) \right) \,.
        \end{aligned}
    \end{equation}
    It is clear that
    \begin{align*}
        & U_{\ell}(t)=U_{\ell,*}(t)+ \frac{1}{\sqrt{n}} \sin(\ell\theta_{t+1}), \quad V_{\ell}(t)=V_{\ell,*}(t)+ \frac{1}{\sqrt{n}}\cos(\ell\theta_{t+1}) \,; \\
        & U_{\ell}(t+1)=U_{\ell,*}(t)+\frac{1}{\sqrt{n}}\zeta_{\ell}(t+1), \quad V_{\ell}(t+1)=V_{\ell,*}(t)+ \frac{1}{\sqrt{n}} \eta_{\ell}(t+1) \,.
    \end{align*}
    Thus, we have that 
    \begin{align}
        F_t &= \sum_{ \substack{ 0 \leq k_1,\ldots,k_L \leq D \\ 0 \leq m_1,\ldots,m_L \leq D } } \mathbb E\left\{ \prod_{1 \leq \ell \leq L} \frac{ \lambda^{2k_\ell} U_\ell(t)^{2k_\ell} }{ k_\ell! } \cdot \frac{ \lambda^{2m_\ell} V_\ell(t)^{2m_\ell} }{ m_\ell! } \right\} \nonumber \\
        &= \sum_{ \substack{ 0 \leq k_1,\ldots,k_L \leq D \\ 0 \leq m_1,\ldots,m_L \leq D } } \mathbb E\left\{ \prod_{1 \leq \ell \leq L} \frac{ \lambda^{2k_\ell} (U_{\ell,*}(t)+ \frac{1}{\sqrt{n}} \sin(\ell\theta_{t+1}))^{2k_\ell} }{ k_\ell! } \cdot \frac{ \lambda^{2m_\ell} (V_{\ell,*}(t)+ \frac{1}{\sqrt{n}} \cos(\ell\theta_{t+1}))^{2m_\ell} }{ m_\ell! } \right\} \nonumber \\
        &= \sum_{ \substack{ 0 \leq \alpha_1,\ldots,\alpha_L \leq 2D \\ 0 \leq \beta_1,\ldots,\beta_L \leq 2D } } \mathbb E\left\{ \prod_{1 \leq \ell \leq L} \left( \frac{\sin(\ell\theta_{t+1})}{\sqrt{n}} \right)^{\alpha_\ell} \left( \frac{\cos(\ell\theta_{t+1})}{\sqrt{n}} \right)^{\beta_\ell} \right\} \Lambda(\alpha_1,\ldots,\alpha_L;\beta_1,\ldots,\beta_L) \,,  \label{eq-F-t-expand}
    \end{align}
    where $\Lambda(\alpha_1,\ldots,\alpha_L;\beta_1,\ldots,\beta_L)$ is defined to be
    \begin{align}{\label{eq-def-Lambda(alpha-ell-beta-ell)}}
        \sum_{ \substack{ \frac{1}{2}\alpha_\ell \leq k_\ell \leq D \\ \frac{1}{2}\beta_\ell \leq m_\ell \leq D } } \mathbb E\left\{ \prod_{1 \leq \ell \leq L} \frac{ \binom{2k_\ell}{\alpha_\ell} \lambda^{2k_\ell} U_{\ell,*}(t)^{2k_\ell-\alpha_\ell} }{ k_\ell! } \prod_{1 \leq \ell \leq L} \frac{ \binom{2m_\ell}{\beta_\ell} \lambda^{2m_\ell} V_{\ell,*}(t)^{2m_\ell-\beta_\ell} }{ m_\ell! } \right\} \,.
    \end{align}
    Here in the derivation of \eqref{eq-F-t-expand} we also use the independence between 
    \begin{align*}
        \left\{ \sin(\ell\theta_{t+1}), \cos(\ell\theta_{t+1}): 1 \leq \ell \leq L \right\} \mbox{ and } \left\{ U_{\ell,*}(t), V_{\ell,*}(t): 1 \leq \ell \leq L \right\} \,.
    \end{align*}
    Similarly, we have
    \begin{align}
        F_{t+1} = \sum_{ \substack{ 0 \leq \alpha_1,\ldots,\alpha_L \leq 2D \\ 0 \leq \beta_1,\ldots,\beta_L \leq 2D } } \mathbb E\left\{ \prod_{1 \leq \ell \leq L} \left( \frac{\zeta_\ell(t+1)}{\sqrt{n}} \right)^{\alpha_\ell} \left( \frac{\eta_\ell(t+1)}{\sqrt{n}} \right)^{\beta_\ell} \right\} \Lambda(\alpha_1,\ldots,\alpha_L;\beta_1,\ldots,\beta_L) \,.  \label{eq-F-t+1-expand}
    \end{align}
    We first argue that
    \begin{align}{\label{eq-bound-growth-Lambda(alpha-ell-beta-ell)}}
        \Lambda(\alpha_1,\ldots,\alpha_L;\beta_1,\ldots,\beta_L) \leq (16D^2)^{\alpha_1+\ldots+\alpha_L+\beta_1+\ldots+\beta_L} \Lambda(0,\ldots,0;0,\ldots,0) \,.
    \end{align}
    To this end, note that (below we use $\binom{b}{a}\leq b^a$) 
    \begin{align*}
        & \binom{2k_\ell}{\alpha_\ell} U_{\ell,*}(t)^{2k_\ell-\alpha_\ell} \leq \frac{1}{2} \left( (2k_\ell)^{2\lfloor\alpha_\ell/2\rfloor} U_{\ell,*}(t)^{2k_\ell-2\lfloor\alpha_\ell/2\rfloor} + (2k_\ell)^{2\lceil\alpha_\ell/2\rceil} U_{\ell,*}(t)^{2k_\ell-2\lceil\alpha_\ell/2\rceil} \right) \,; \\
        & \binom{2m_\ell}{\beta_\ell} V_{\ell,*}(t)^{2m_\ell-\beta_\ell} \leq \frac{1}{2} \left( (2m_\ell)^{2\lfloor\beta_\ell/2\rfloor} V_{\ell,*}(t)^{2m_\ell-2\lfloor\beta_\ell/2\rfloor} + (2m_\ell)^{2\lceil\beta_\ell/2\rceil} V_{\ell,*}(t)^{2m_\ell-2\lceil\beta_\ell/2\rceil} \right) \,.
    \end{align*}
    We have that 
    \begin{align*}
        &\Lambda(\alpha_1,\ldots,\alpha_L;\beta_1,\ldots,\beta_L) \\
        \leq\ & \frac{1}{2^{2L}} \sum_{ \substack{ \alpha_\ell' \in \{ \lfloor\alpha_\ell/2\rfloor, \lceil\alpha_\ell/2\rceil \} \\ \beta_\ell' \in \{ \lfloor\beta_\ell/2\rfloor, \lceil\beta_\ell/2\rceil \} } } \sum_{ \substack{ \frac{1}{2}\alpha_\ell \leq k_\ell \leq D \\ \frac{1}{2}\beta_\ell \leq m_\ell \leq D } } \mathbb E\left\{ \prod_{1 \leq \ell \leq L} \frac{ \lambda^{2k_\ell} (2k_\ell)^{2\alpha_\ell'} U_{\ell,*}(t)^{2k_\ell-2\alpha_\ell'} }{ k_\ell! } \prod_{1 \leq \ell \leq L} \frac{ \lambda^{2m_\ell} (2m_\ell)^{2\beta_\ell'} V_{\ell,*}(t)^{2m_\ell-2\beta_\ell'} }{ m_\ell! } \right\} \\ 
        \leq\ & \frac{1}{2^{2L}} \sum_{ \substack{ \alpha_\ell' \in \{ \lfloor\alpha_\ell/2\rfloor, \lceil\alpha_\ell/2\rceil \} \\ \beta_\ell' \in \{ \lfloor\beta_\ell/2\rfloor, \lceil\beta_\ell/2\rceil \} } } \sum_{ \substack{ \alpha_\ell' \leq k_\ell \leq D \\ \beta_\ell' \leq m_\ell \leq D } } \mathbb E\left\{ \prod_{1 \leq \ell \leq L} \frac{ \lambda^{2k_\ell} (2k_\ell)^{2\alpha_\ell'} U_{\ell,*}(t)^{2k_\ell-2\alpha_\ell'} }{ k_\ell! } \prod_{1 \leq \ell \leq L} \frac{ \lambda^{2m_\ell} (2m_\ell)^{2\beta_\ell'} V_{\ell,*}(t)^{2m_\ell-2\beta_\ell'} }{ m_\ell! } \right\} \\
        :=\ & \frac{1}{2^{2L}} \sum_{ \substack{ \alpha_\ell' \in \{ \lfloor\alpha_\ell/2\rfloor, \lceil\alpha_\ell/2\rceil \} \\ \beta_\ell' \in \{ \lfloor\beta_\ell/2\rfloor, \lceil\beta_\ell/2\rceil \} } } 2^{ 2(\alpha_1'+\ldots+\alpha_L'+\beta_1'+\ldots+\beta_L') } \overline{\Lambda}(2\alpha_1',\ldots,2\alpha_L';2\beta_1',\ldots,2\beta_L') \,,
    \end{align*}
    where in the second inequality we use the fact that $k_\ell \geq \frac{\alpha_\ell}{2}$ implies that $k_\ell \geq \lceil \frac{\alpha_\ell}{2} \rceil \geq \lfloor \frac{\alpha_\ell}{2} \rfloor$ as $k_\ell \in \mathbb N$.
    Thus, it suffices to show that (note that $\lfloor \frac{\alpha_\ell}{2} \rfloor \leq \lceil \frac{\alpha_\ell}{2} \rceil \leq \alpha_\ell$)
    \begin{align}{\label{eq-bound-growth-Lambda(alpha-ell-beta-ell)-even}}
        \overline{\Lambda}(2\alpha_1,\ldots,2\alpha_L; 2\beta_1,\ldots,2\beta_L) \leq (2D)^{2\alpha_1+\ldots+2\alpha_L+2\beta_1+\ldots+2\beta_L} \Lambda(0,\ldots,0;0,\ldots,0) \,.
    \end{align}
    Note that 
    \begin{align*}
        &\overline{\Lambda}(2\alpha_1,\ldots,2\alpha_L; 2\beta_1,\ldots,2\beta_L) \\
        \leq\ & \sum_{ \substack{ \alpha_\ell \leq k_\ell \leq D \\ \beta_\ell \leq m_\ell \leq D } } \mathbb E\left\{ \prod_{1 \leq \ell \leq L} \frac{ \lambda^{2k_\ell} k_\ell^{2\alpha_\ell} U_{\ell,*}(t)^{2k_\ell-2\alpha_\ell} }{ k_\ell! } \prod_{1 \leq \ell \leq L} \frac{ \lambda^{2m_\ell} m_\ell^{2\beta_\ell} V_{\ell,*}(t)^{2m_\ell-2\beta_\ell} }{ m_\ell! } \right\} \\ 
        \leq\ & \sum_{ \substack{ 0 \leq k_\ell \leq D-\alpha_\ell \\ 0 \leq m_\ell \leq D-\beta_\ell } } \mathbb E\left\{ \prod_{1 \leq \ell \leq L} \frac{ \lambda^{2(k_\ell+\alpha_\ell)} (k_\ell+\alpha_\ell)^{2\alpha_\ell} U_{\ell,*}(t)^{2k_\ell} }{ (k_\ell+\alpha_\ell)! } \prod_{1 \leq \ell \leq L} \frac{ \lambda^{2(m_\ell+\beta_\ell)} (m_\ell+\beta_\ell)^{2\beta_\ell} V_{\ell,*}(t)^{2m_\ell} }{ (m_\ell+\beta_\ell)! } \right\} \\
        \leq\ & \sum_{ \substack{ 0 \leq k_\ell \leq D-\alpha_\ell \\ 0 \leq m_\ell \leq D-\beta_\ell } } \mathbb E\left\{ \prod_{1 \leq \ell \leq L} \frac{ \lambda^{2k_\ell} (2D)^{2\alpha_\ell} U_{\ell,*}(t)^{2k_\ell} }{ k_\ell! } \prod_{1 \leq \ell \leq L} \frac{ \lambda^{2m_\ell} (2D)^{2\beta_\ell}V_{\ell,*}(t)^{2m_\ell} }{ m_\ell! } \right\} \\
        \leq\ & (4D^2)^{\alpha_1+\ldots+\alpha_L+\beta_1+\ldots+\beta_L} \sum_{ \substack{ 0 \leq k_\ell \leq D-\alpha_\ell \\ 0 \leq m_\ell \leq D-\beta_\ell } } \mathbb E\left\{ \prod_{1 \leq \ell \leq L} \frac{ \lambda^{2k_\ell} U_{\ell,*}(t)^{2k_\ell} }{ k_\ell! } \prod_{1 \leq \ell \leq L} \frac{ \lambda^{2m_\ell} V_{\ell,*}(t)^{2m_\ell} }{ m_\ell! } \right\} \\
        \leq\ & (4D^2)^{\alpha_1+\ldots+\alpha_L+\beta_1+\ldots+\beta_L} \Lambda(0,\ldots,0;0,\ldots,0) \,,
    \end{align*}
    yielding \eqref{eq-bound-growth-Lambda(alpha-ell-beta-ell)-even} and thus leading to \eqref{eq-bound-growth-Lambda(alpha-ell-beta-ell)}. Now, as $D=n^{o(1)}$, we have
    \begin{align}
        F_t &\overset{\eqref{eq-F-t-expand}}{=} \Lambda(0,\ldots,0;0,\ldots,0) + \sum n^{-\frac{1}{2}(\alpha_1+\ldots+\alpha_L+\beta_1+\ldots+\beta_L)} \Lambda(\alpha_1,\ldots,\alpha_L;\beta_1,\ldots,\beta_L) \nonumber \\
        &\overset{\eqref{eq-bound-growth-Lambda(alpha-ell-beta-ell)}}{=} (1+n^{-\frac{1}{2}+o(1)}) \Lambda(0,\ldots,0;0,\ldots,0) \,. \label{eq-F-t-approx-Lambda-0}
    \end{align}
    In addition, using \eqref{eq-F-t-expand} and \eqref{eq-F-t+1-expand} and noting that
    \begin{align*}
        &\mathbb E\left\{ \prod_{1 \leq \ell \leq L} \left( \frac{\sin(\ell\theta_{t+1})}{\sqrt{n}} \right)^{\alpha_\ell} \left( \frac{\cos(\ell\theta_{t+1})}{\sqrt{n}} \right)^{\beta_\ell} \right\} - \mathbb E\left\{ \prod_{1 \leq \ell \leq L} \left( \frac{\zeta_\ell(t+1)}{\sqrt{n}} \right)^{\alpha_\ell} \left( \frac{\eta_\ell(t+1)}{\sqrt{n}} \right)^{\beta_\ell} \right\} \\
        =\ &
        \begin{cases}
            0, & \alpha_1+\ldots+\alpha_L+\beta_1+\ldots+\beta_L \leq 2 \,; \\
            n^{ -(\frac{1}{2}+o(1))(\alpha_1+\ldots+\alpha_L+ \beta_1+\ldots+\beta_L) }, & \mbox{otherwise} \,.
        \end{cases}
    \end{align*}
    We then have
    \begin{align*}
        F_{t+1}-F_t &= \sum_{ \sum \alpha_\ell + \sum \beta_\ell \geq 3 } n^{ -(\frac{1}{2}+o(1))(\alpha_1+\ldots+\alpha_L+ \beta_1+\ldots+\beta_L) } \cdot \Lambda(\alpha_1,\ldots,\alpha_L;\beta_1,\ldots,\beta_L) \\
        &\overset{\eqref{eq-bound-growth-Lambda(alpha-ell-beta-ell)}}{=} \sum_{ \sum \alpha_\ell + \sum \beta_\ell \geq 3 } n^{ -(\frac{1}{2}+o(1))(\alpha_1+\ldots+\alpha_L+ \beta_1+\ldots+\beta_L) } \cdot \Lambda(0,\ldots,0;0,\ldots,0) \\
        &= n^{-\frac{3}{2}+o(1)} \Lambda(0,\ldots,0;0,\ldots,0) \overset{\eqref{eq-F-t-approx-Lambda-0}}{=} n^{-\frac{3}{2}+o(1)} F_t \,,
    \end{align*}
    where in the third equality we use the fact that $\#\{ (\alpha_\ell,\beta_\ell)_{1 \leq \ell \leq L}: \sum \alpha_\ell + \sum \beta_\ell =k \} \leq (2L)^k$ and $L=n^{o(1)}$.
\end{proof}
We now provide the proof of Lemma~\ref{lem-bound-low-deg-Adv}.
\begin{proof}[Proof of Lemma~\ref{lem-bound-low-deg-Adv}]
    Using Lemma~\ref{lem-interpolate-F-t-F-t+1}, we have 
    \begin{align*}
        \eqref{eq-bound-Adv-relax-2} &= F_0 = [1+O(n^{-\frac{3}{2}+o(1)})]^n F_n = [1+n^{-\frac{1}{2}+o(1)}] F_n \\
        &= \mathbb E\left\{ \prod_{1 \leq \ell \leq L} \exp_{\leq D}\left( \lambda^2 \zeta_{\ell}^2 \right) \prod_{1 \leq \ell \leq L} \exp_{\leq D}\left( \lambda^2 \eta_{\ell}^2 \right) \right\} \\
        &\leq \prod_{1 \leq \ell \leq L} \mathbb E\left\{ \exp\left( \lambda^2 \zeta_{\ell}^2 \right) \right\} \prod_{1 \leq \ell \leq L} \mathbb E\left\{ \exp\left( \lambda^2 \eta_{\ell}^2 \right) \right\} \,,
    \end{align*}
    where in the second inequality we use the independence in $\{ \zeta_\ell,\eta_\ell:1 \leq \ell \leq L \}$ and the fact that $\exp_{\leq D}(x) \leq \exp(x)$ for any $x \geq 0$. Since $\zeta_\ell,\eta_\ell \overset{i.i.d.}{\sim} \mathcal N(0,\frac{1}{2})$ and $\lambda<1-\Omega(1)$, we then have 
    \begin{align*}
        \mathbb E\left\{ \exp\left( \lambda^2 \zeta_{\ell}^2 \right) \right\}, \ \mathbb E\left\{ \exp\left( \lambda^2 \eta_{\ell}^2 \right) \right\} = O(1) \,,
    \end{align*}
    which implies that $\eqref{eq-bound-Adv-relax-2}=\exp(O(L))$ and leads to the desired result.
\end{proof}

\subsection{Proof of Lemma~\ref{lem-reduce-to-one-sided-test}}{\label{subsec:proof-lem-3.2}}

Our proof is inspired by \cite{DHSS25}. Suppose that for some parameters $\lambda<1-\delta$, given 
\begin{align*}
    (\bm Y_1,\ldots,\bm Y_L) \sim \Pb_{n,L,\lambda} \,,
\end{align*}
there exists an estimator $\mathcal X(\bm Y_1,\ldots,\bm Y_L)$ satisfying Definition~\ref{def-weak-recovery}, i.e., 
\begin{align*}
    \mathbb E_{\Pb}\left[ \frac{ \langle \mathcal X, \bm x \bm x^{*} \rangle }{ \| \mathcal X \|_{\Fop} \| \bm x \bm x^{*} \|_{\Fop} } \right] \geq c \mbox{ for some constant } c>0 \,.
\end{align*}
Using standard Markov inequality, we then have
\begin{align*}
    \Pb\left( \frac{ \langle \mathcal X, \bm x \bm x^{*} \rangle }{ \| \mathcal X \|_{\Fop} \| \bm x \bm x^{*} \|_{\Fop} } \geq \frac{c}{2} \right) = 1-\Pb\left(1-\frac{ \langle \mathcal X, \bm x \bm x^{*} \rangle }{ \| \mathcal X \|_{\Fop} \| \bm x \bm x^{*} \|_{\Fop} }\ge 1-\frac{c}{2}\right) \geq 1-\frac{1-c}{1-\frac{c}{2}} \geq \frac{c}{2} \,.
\end{align*}
We now choose a sufficient small constant $\kappa>0$ such that 
\begin{equation}{\label{eq-choice-kappa}}
    \frac{ \lambda }{ 1+\kappa^2 } < 1-\Omega(1) \,.
\end{equation}
Now, suppose $(\bm Y_1,\ldots,\bm Y_L) \sim \Pb_{n,L(1+\kappa^2)\lambda}$ and recall we introduce external randomness $(\bm Z_1,\ldots,\bm Z_L)$. For $1 \leq \ell \leq L$, we set
\begin{align}
    \bm A_{\ell} = \frac{ 1 }{ \sqrt{1+\kappa^2} } \left( \bm Y_{\ell}+ \kappa \bm Z_{\ell} \right), \quad \bm B_{\ell} = \frac{ 1 }{ \sqrt{1+\kappa^{-2}} } \left( \bm Y_{\ell} - \kappa^{-1} \bm Z_{\ell} \right) \,.  \label{eq-split-into-two-matrices}
\end{align}
The reasons for the definition in \eqref{eq-split-into-two-matrices} is as follows (recall Definition~\ref{def-multi-frequency-group-synchronization}):
\begin{itemize}
    \item Under $(\bm Y_1,\ldots,\bm Y_L) \sim \Qb_{n,L}$, we have 
    \begin{align}
        \bm A_{\ell} = \frac{ 1 }{ \sqrt{1+\kappa^2} } \left( \bm W_{\ell}+ \kappa \bm Z_{\ell} \right), \quad \bm B_{\ell} = \frac{ 1 }{ \sqrt{1+\kappa^{-2}} } \left( \bm W_{\ell} - \kappa^{-1} \bm Z_{\ell} \right) \,.  \label{eq-behavior-Y-1,2-Qb}
    \end{align}
    In particular, $(\bm A_1,\ldots,\bm A_{L};\bm B_1,\ldots,\bm B_L)$ are matrices with i.i.d.\ standard complex normal entries and $(\bm B_1,\ldots,\bm B_L)$ is \underline{independent of} $(\bm A_1,\bm A_1,\ldots,\bm A_L)$.
    \item Under $(\bm Y_1,\ldots,\bm Y_L) \sim\Pb_{n,L,(1+\kappa^2)\lambda}$, we have 
    \begin{align}
        \bm A_\ell = \frac{ \lambda }{ \sqrt{n} } (\bm x^{(\ell)}) (\bm x^{(\ell)})^{*} + \overline{\bm A}_{\ell}, \quad \bm B_{\ell} = \frac{ \lambda \sqrt{ (1+\kappa^2) } }{ \sqrt{n(1+\kappa^{-2})} } (\bm x^{(\ell)}) (\bm x^{(\ell)})^{*} + \overline{\bm B}_{\ell} \,,  \label{eq-behavior-Y-1,2-Pb}
    \end{align}
    where 
    \begin{align*}
        \overline{\bm A}_{\ell} = \frac{ 1 }{ \sqrt{1+\kappa^2} } \left( \bm W_{\ell}+ \kappa \bm Z_{\ell} \right), \quad \overline{\bm B}_{\ell} = \frac{ 1 }{ \sqrt{1+\kappa^{-2}} } \left( \bm W_{\ell} - \kappa^{-1} \bm Z_{\ell} \right) \,.
    \end{align*}
    In particular, $(\overline{\bm A}_1,\ldots,\overline{\bm A}_{L}; \overline{\bm B}_1,\ldots,\overline{\bm B}_L)$ are matrices with i.i.d.\ standard normal entries and $(\overline{\bm B}_1,\ldots,\overline{\bm B}_L)$ is \underline{independent of} $(\bm A_1,\ldots,\bm A_L)$. Also we have $(\bm A_1,\ldots,\bm A_L) \sim \Pb_{n,L,\lambda}$.
\end{itemize}

Now, since $(\bm A_1,\ldots,\bm A_L) \sim \Pb_{n,L,\lambda}$ under $\Pb_{\bullet,n,L,(1+\kappa^2)\lambda}$, we can find an estimator 
\begin{align}{\label{eq-generate-estimator}}
    \mathcal X( \bm A_1,\ldots,\bm A_L ) \mbox{ such that } \Pb_{\bullet}\Bigg( \frac{ \langle \mathcal X, \bm x \bm x^{*} \rangle }{ \| \mathcal X \|_{\Fop} \| \bm x \bm x^{*} \|_{\Fop} } \geq \frac{c}{2} \Bigg) \geq \frac{c}{2} \,.
\end{align}
The next lemma shows that $\langle \mathcal X, \bm B_1 \rangle$ is ``large'' under $\Pb_{\bullet}$ and ``small'' under $\Qb_{\bullet}$.
\begin{lemma}{\label{lem-behavior-Pb-Qb}}
    Suppose we choose $\mathcal X( \bm A_1,\ldots,\bm A_L )$ as in \eqref{eq-generate-estimator}. Then 
    \begin{align}
        &\Pb_{\bullet}\left( \big\langle \mathcal X, \bm B_1 \big\rangle \geq \frac{ c\lambda \| \mathcal X \|_{\Fop} }{ 4 } \sqrt{ \frac{ (1+\kappa^2)n }{ (1+\kappa^{-2}) } } \right) \geq \frac{c}{2}-e^{-\Theta(n)} \,;  \label{eq-behavior-Pb} \\
        &\Qb_{\bullet}\left( \big\langle \mathcal X, \bm B_1 \big\rangle \geq \frac{ c\lambda \| \mathcal X \|_{\Fop} }{ 4 } \sqrt{ \frac{ (1+\kappa^2)n }{ (1+\kappa^{-2}) } } \right) \leq e^{-\Theta(n)} \,.  \label{eq-behavior-Qb}
    \end{align}
\end{lemma}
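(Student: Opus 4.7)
The plan is to decompose $\langle\mathcal X, \bm B_1\rangle$ into a signal contribution (present only under $\Pb_{\bullet}$) and a Gaussian noise contribution, and then apply a conditional Gaussian tail bound to the noise. The key arithmetic observation is the identity $\tfrac{1+\kappa^2}{1+\kappa^{-2}} = \kappa^2$, so that the target threshold $\tfrac{c\lambda\|\mathcal X\|_{\Fop}}{4}\sqrt{\tfrac{(1+\kappa^2)n}{1+\kappa^{-2}}}$ simplifies to $\tfrac{c\lambda\kappa\|\mathcal X\|_{\Fop}\sqrt n}{4}$, and the signal coefficient $\tfrac{\lambda\sqrt{1+\kappa^2}}{\sqrt{n(1+\kappa^{-2})}}$ appearing in \eqref{eq-behavior-Y-1,2-Pb} equals $\tfrac{\lambda\kappa}{\sqrt n}$. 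The starting point is the event
\[
E_1 := \left\{ \frac{\langle\mathcal X, \bm x\bm x^{*}\rangle}{\|\mathcal X\|_{\Fop}\|\bm x\bm x^{*}\|_{\Fop}} \geq \frac{c}{2} \right\},
\]
which the Markov-type argument in the paragraph preceding Lemma~\ref{lem-behavior-Pb-Qb} establishes as satisfying $\Pb_{\bullet}(E_1) \geq c/2$. Since $|\bm x_k|=1$ forces $\|\bm x\bm x^{*}\|_{\Fop} = n$, on $E_1$ we have $\langle\mathcal X, \bm x\bm x^{*}\rangle \geq \tfrac{cn}{2}\|\mathcal X\|_{\Fop}$.

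The central independence fact I will exploit is that $\overline{\bm B}_1 = \tfrac{1}{\sqrt{1+\kappa^{-2}}}(\bm W_1 - \kappa^{-1}\bm Z_1)$ is independent of $(\bm A_1,\ldots,\bm A_L)$, and therefore of $\mathcal X$, under both $\Pb_{\bullet}$ and $\Qb_{\bullet}$. Indeed, $\overline{\bm B}_1$ depends only on $(\bm W_1,\bm Z_1)$; its cross-covariance with $\overline{\bm A}_1 = \tfrac{1}{\sqrt{1+\kappa^2}}(\bm W_1 + \kappa\bm Z_1)$ is proportional to $1\cdot 1 + \kappa\cdot(-\kappa^{-1}) = 0$, so by joint Gaussianity $\overline{\bm B}_1 \perp \overline{\bm A}_1$; and $\overline{\bm B}_1$ is automatically independent of $\bm x$ and of $(\bm W_\ell,\bm Z_\ell)_{\ell \geq 2}$. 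A direct entry-wise computation using the Hermitian GUE normalization then yields $\mathbb E[\langle\mathcal X, \overline{\bm B}_1\rangle^2 \mid \mathcal X] = \|\mathcal X\|_{\Fop}^2$, and conditional on $\mathcal X$ this inner product is a real mean-zero Gaussian with that variance.

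Now \eqref{eq-behavior-Pb} follows by writing $\langle\mathcal X, \bm B_1\rangle = \tfrac{\lambda\kappa}{\sqrt n}\langle\mathcal X, \bm x\bm x^{*}\rangle + \langle\mathcal X, \overline{\bm B}_1\rangle$ via \eqref{eq-behavior-Y-1,2-Pb}. On $E_1$ the signal term is at least $\tfrac{c\lambda\kappa\|\mathcal X\|_{\Fop}\sqrt n}{2}$, which is twice the target threshold, so the desired inequality holds as long as the noise term exceeds $-\tfrac{c\lambda\kappa\|\mathcal X\|_{\Fop}\sqrt n}{4}$. The standard Gaussian tail bound yields
\[
\Pb_{\bullet}\!\left(|\langle\mathcal X, \overline{\bm B}_1\rangle| \geq \tfrac{c\lambda\kappa\|\mathcal X\|_{\Fop}\sqrt n}{4} \,\Big|\, \mathcal X \right) \leq 2\exp(-\Theta(n)),
\]
so intersecting with $E_1$ and averaging over $\mathcal X$ gives \eqref{eq-behavior-Pb}. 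For \eqref{eq-behavior-Qb} the argument is even shorter: under $\Qb_{\bullet}$ we have $\bm B_1 = \overline{\bm B}_1$, so $\langle\mathcal X, \bm B_1\rangle \mid \mathcal X$ is mean-zero Gaussian with variance $\|\mathcal X\|_{\Fop}^2$, and the same Gaussian tail bound applied at the threshold $\tfrac{c\lambda\kappa\|\mathcal X\|_{\Fop}\sqrt n}{4}$ gives the claimed $e^{-\Theta(n)}$ bound, uniformly in $\mathcal X$.

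No real obstacle is expected: the entire argument reduces to Gaussian concentration coupled with the algebraic bookkeeping of the split \eqref{eq-split-into-two-matrices}. The only care required is to verify the Hermitian GUE variance identity and the precise independence statement $\overline{\bm B}_1 \perp \mathcal X$, both of which follow from the coefficients $(1,\kappa)$ and $(1,-\kappa^{-1})$ in \eqref{eq-split-into-two-matrices} being chosen exactly so as to annihilate the cross-covariance between $\overline{\bm A}_1$ and $\overline{\bm B}_1$. The same Gaussian tail bound is invoked twice with the same variance proxy, so the constants in the two $e^{-\Theta(n)}$ statements can be taken identical.
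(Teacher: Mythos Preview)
Your proposal is correct and follows essentially the same approach as the paper: decompose $\langle\mathcal X,\bm B_1\rangle$ under $\Pb_{\bullet}$ into the signal term $\tfrac{\lambda\sqrt{1+\kappa^2}}{\sqrt{n(1+\kappa^{-2})}}\langle\mathcal X,\bm x\bm x^{*}\rangle$ and the noise term $\langle\mathcal X,\overline{\bm B}_1\rangle$, use the weak-recovery event $E_1$ together with $\|\bm x\bm x^{*}\|_{\Fop}=n$ to lower bound the signal, and apply the conditional Gaussian tail bound (via the independence $\overline{\bm B}_1\perp(\bm A_1,\ldots,\bm A_L)$) for both the $\Pb_{\bullet}$ noise term and the $\Qb_{\bullet}$ bound. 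The only cosmetic difference is that you make the algebraic simplification $\tfrac{1+\kappa^2}{1+\kappa^{-2}}=\kappa^2$ explicit and treat the two inequalities in the opposite order.
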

\begin{proof}
    We first prove \eqref{eq-behavior-Qb}. Recall \eqref{eq-behavior-Y-1,2-Qb}, under $\Qb_{\bullet}$ we have $\bm B_1$ is a standard $\mathsf{GUE}(n)$ matrix independent with $(\bm A_1,\ldots,\bm A_L)$ (and thus also independent with $\mathcal X$). Thus, conditioned on $\mathcal X$ we have
    \begin{align*}
        \big\langle \mathcal X, \bm B_1 \big\rangle \sim \mathcal N\left( 0,\| \mathcal X \|_{\Fop}^2 \right) \,.
    \end{align*}
    Thus, from a simple Gaussian tail inequality we see that \eqref{eq-behavior-Qb} holds.
    
    We then prove \eqref{eq-behavior-Pb}. Recall \eqref{eq-behavior-Y-1,2-Pb}, we can decompose $\langle \mathcal X, \bm B_1 \rangle$ into the following terms:
    \begin{align}
        \big\langle \mathcal X, \bm B_1 \big\rangle &= \frac{ \lambda \sqrt{ (1+\kappa^2) } }{ \sqrt{n(1+\kappa^{-2})} } \cdot \big\langle \mathcal X, \bm x \bm x^{*} \big\rangle \label{eq-behavior-Pb-part-1}  \\
        &+ \big\langle \mathcal X, \overline{\bm B}_1  \big\rangle \,.  \label{eq-behavior-Pb-part-2}
    \end{align}
    Using \eqref{eq-generate-estimator}, we see that (note that $\| \bm x \bm x^{*} \|_{\Fop}=\| \bm x \|^2=n$)  
    \begin{align}
        & \Pb_{\bullet}\left( \eqref{eq-behavior-Pb-part-1} \geq \frac{ c\lambda \| \mathcal X \|_{\Fop} }{ 2 } \sqrt{ \frac{ (1+\kappa^2)n }{ (1+\kappa^{-2}) } } \right) = \Pb_{\bullet}\left( \big\langle \mathcal X, \bm x \bm x^{*} \big\rangle \geq \frac{cn\| \mathcal X \|_{\Fop}}{2} \right) \nonumber \\
        \ge \ & \Pb_{\bullet}\left( \big\langle \mathcal X, \bm x \bm x^{*} \big\rangle \geq \frac{c}{2} \| \mathcal X \|_{\Fop} \| \bm x \bm x^{*} \|_{\Fop} \right) \geq \frac{c}{2} \,.  \label{eq-behavior-Pb-part-1-bound}
    \end{align}
    where the last inequality follows from \eqref{eq-generate-estimator}.
    In addition, from the proof of \eqref{eq-behavior-Qb}, we see that
    \begin{align}
        \Pb_{\bullet}\left( |\eqref{eq-behavior-Pb-part-2}| \geq \frac{ c\lambda \| \mathcal X \|_{\Fop} }{ 4 } \sqrt{ \frac{ (1+\kappa^2)n }{ (1+\kappa^{-2}) } } \right) \leq e^{-\Theta(n)} \,.   \label{eq-behavior-Pb-part-2-bound}
    \end{align}
    Combining \eqref{eq-behavior-Pb-part-1-bound} and \eqref{eq-behavior-Pb-part-2-bound}, we have that 
    \begin{align*}
    &\Pb_{\bullet}\left( \big\langle \mathcal X, \bm B_1 \big\rangle \geq \frac{ c\lambda \| \mathcal X \|_{\Fop} }{ 4 } \sqrt{ \frac{ (1+\kappa^2)n }{ (1+\kappa^{-2}) } } \right) \\
    \ge\ & \Pb_{\bullet}\left(\eqref{eq-behavior-Pb-part-1} \geq \frac{ c\lambda \| \mathcal X \|_{\Fop} }{ 2 } \sqrt{ \frac{ (1+\kappa^2)n }{ (1+\kappa^{-2}) } } \right) \\ 
    +\ &\Pb_{\bullet}\left( |\eqref{eq-behavior-Pb-part-2}| < \frac{ c\lambda \| \mathcal X \|_{\Fop} }{ 4 } \sqrt{ \frac{ (1+\kappa^2)n }{ (1+\kappa^{-2}) } } \right)\ge \frac{c}{2}-e^{-\Theta(n)} \,. 
    \end{align*}
    We then see that \eqref{eq-behavior-Pb} holds.
\end{proof}
We point out that Lemma~\ref{lem-behavior-Pb-Qb} immediately implies the existence of an $(O(\mathrm T);\Omega(1);e^{-\Theta(n)})$-test between $\Pb_{\bullet}$ and $\Qb_{\bullet}$.



\bibliographystyle{alpha}
\small

\end{document}